\documentclass[11pt]{amsart}

\usepackage{amsfonts,amssymb,amsmath,amsthm,euscript,indentfirst} 
\usepackage{mathtools} 

\numberwithin{equation}{section}
\newtheorem{theorem}{Theorem}[section]
\newtheorem{proposition}[theorem]{Proposition}
\newtheorem{corollary}[theorem]{Corollary}

\newtheorem{lemma}[theorem]{Lemma}

\theoremstyle{definition}

\theoremstyle{remark}
\newtheorem{remark}[theorem]{Remark}

\tolerance=1000
\emergencystretch=5pt

\relpenalty=10000   
\binoppenalty=10000  

\DeclarePairedDelimiter\ceil{\lceil}{\rceil}
\DeclarePairedDelimiter\floor{\lfloor}{\rfloor}
\DeclareMathOperator{\CP}{\mathbb{C}\mathrm{P}}
\DeclareMathOperator{\ord}{ord}
\DeclareMathOperator{\rk}{rk}
\DeclareMathOperator{\tr}{tr}

\DeclareMathOperator{\Hess}{Hess}

\DeclareMathOperator{\HM1}{\mathit{HM}\!{}_1 (\mathit{n}\!+\!1)}
\newcommand*{\abs}[1]{\left|#1\right|} 
\newcommand*{\brr}[1]{\left(#1\right)} 
\newcommand*{\brc}[1]{\left\{#1\right\}} 
\newcommand{\area}{\mathrm{Area}}

\begin{document}

\title{The first eigenvalue of the Laplacian on orientable surfaces}

\begin{abstract}

The famous Yang-Yau inequality provides an upper bound for the first eigenvalue of the Laplacian on an orientable Riemannian surface solely in terms of its genus $\gamma$ and the area. Its proof relies on the existence of holomorhic maps to $\mathbb{CP}^1$ of low degree. Very recently, Ros was able to use certain holomorphic maps to $\mathbb{CP}^2$ in order to give a quantitative improvement of the Yang-Yau inequality for $\gamma=3$. In the present paper, we generalize Ros' argument to make use of holomorphic maps to $\mathbb{CP}^n$ for any $n>0$. As an application, we obtain a quantitative improvement of the Yang-Yau inequality for all genera except for $\gamma = 4,6,8,10,14$.

\end{abstract}
\author[M. Karpukhin]{Mikhail Karpukhin}
\address{Mathematics 253-37, Caltech, Pasadena, CA 91125, USA
}
\email{mikhailk@caltech.edu}
\author[D. Vinokurov]{Denis Vinokurov}
\address{Faculty of Mathematics, HSE University, Usacheva str. 6, 119048 Moscow, Russia}
\email{vinokurov.den.i@gmail.com}
\maketitle

\section{Introduction}

\subsection{Yang-Yau inequality} Let $(\Sigma,g)$ be a closed orientable Riemannian surface. The Laplace-Beltrami operator, or Laplacian, is defined as $\Delta_g = \delta_g d$, where $\delta_g$ is the formal adjoint of the differential $d$. For closed manifolds the spectrum of $\Delta_g$ consists only of eigenvalues and forms the following sequence
$$
0 = \lambda_0(\Sigma,g)<\lambda_1(\Sigma,g)\leqslant \lambda_2(\Sigma,g)\leqslant\ldots\nearrow\infty,
$$  
where eigenvalues are written with multiplicities.

Consider the normalized eigenvalues
$$
\bar\lambda_k(\Sigma,g) = \lambda_k(\Sigma,g)\area(\Sigma,g).
$$
The problem of geometric optimization of eigenvalues consists in determining the exact values of the following quantities
$$
\Lambda_k(\Sigma) = \sup_g\bar\lambda_k(\Sigma,g).
$$
We refer to~\cite{KNPP,KRP2} and references therein for a detailed survey of recent developments on the problem.

In the present paper we focus on the case $k=1$. The first general upper bound on $\Lambda_1(\Sigma)$ was obtained by Yang and Yau in~\cite{YY} who proved that if $\Sigma$ has genus $\gamma$, then
$$
\Lambda_1(\Sigma)\leqslant 8\pi (\gamma + 1)
$$
However, it was soon remarked in~\cite{ESI} that the same proof yields the following improved bound
\begin{equation}
\label{YY:ineq}
\Lambda_1(\Sigma)\leqslant 8\pi\floor*{\frac{\gamma+3}{2}},
\end{equation}
where $\floor*{x}$ is the floor function or the integer part of $x$. In the following we refer to~\eqref{YY:ineq} as the Yang-Yau inequality. Since the original paper~\cite{YY}, alternative proofs of~\eqref{YY:ineq} have appeared in~\cite{BLY, LY}. 

According to the results of Hersch~\cite{Hersch} and Nayatani-Shoda~\cite{NayataniShoda}, the Yang-Yau inequality is sharp for $\gamma=0$ and $\gamma=2$ respectively. Apart from that, the exact value of $\Lambda_1(\mathbb{T}^2)$ was computed by Nadirashvili in~\cite{Nadirashvili_torus},
$$
\Lambda_1(\mathbb{T}^2) = \frac{8\pi^2}{\sqrt{3}}.
$$
At the same time, it is known that for $\gamma\ne 0,2$ the Yang-Yau inequality is strict, see~\cite{KYY}. The existence of metrics achieving $\Lambda_1(\Sigma)$ has been recently established in~\cite{MS}.

\begin{remark}
The problem of determining $\Lambda_1(\Sigma)$ makes sense for non-orientable $\Sigma$, see~\cite{Karpukhin_nor} for a generalization of~\eqref{YY:ineq} to this setting. We refer to~\cite{LY} and~\cite{EGJ, JNP, CKM} for the exact values of $\Lambda_1$ on the projective plane and Klein bottle respectively. The existence result of~\cite{MS} continues to hold for non-orientable surfaces.
\end{remark}

Finally, in a recent paper~\cite{Ros:main article} Ros obtained a quantitative improvement of~\eqref{YY:ineq} for $\gamma=3$. Namely, he proved that if $\Sigma$ is an orientable surface of genus $3$, then
$$
\Lambda_1(\Sigma)\leqslant 16(4-\sqrt{7})\pi\approx 21.668\pi<24\pi,
$$
where $24\pi$ is the bound given by the Yang-Yau inequality. The results of the present paper are heavily inspired by the work of Ros and are essentially a generalization of~\cite{Ros:main article} to higher genera.

\subsection{Main results} Our main result can be stated as follows.

\begin{theorem}\label{th:the main theorem}
Let $\Sigma_\gamma$ be a compact orientable surface of genus $\gamma$. Then one has
\begin{equation}
\label{main:ineq}
\Lambda_1(\Sigma_\gamma) \leqslant \frac{2\pi}{13-\sqrt{15}} \brr{\gamma + \brr{33-4\sqrt{15}}\ceil*{\frac{5\gamma}{6}} + 4\brr{41-5\sqrt{15}}},
\end{equation}
where $\lceil\cdot\rceil$ is the ceiling function. In particular,
$$
\Lambda_1(\Sigma_\gamma) < 8\pi(0.43\gamma + 2.86).
$$
\end{theorem}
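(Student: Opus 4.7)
The plan is to generalize the Yang-Yau and Ros arguments by employing holomorphic maps $\phi\colon\Sigma_\gamma\to\CP^n$ of low degree for an optimally chosen $n$, pulling back the Fubini-Study structure, and combining a Hersch-type conformal balancing with a quantitative refinement.

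First, I would invoke classical Brill-Noether theory to produce, for each $n\geqslant 1$, a non-degenerate holomorphic map $\phi\colon\Sigma_\gamma\to\CP^n$ of degree at most $d_n(\gamma) = \ceil*{\frac{n(\gamma+n+1)}{n+1}}$; this is the content of the Kempf--Kleiman--Laksov existence theorem, since the Brill-Noether number $\rho(\gamma,n,d_n(\gamma))$ is non-negative. For $n=1$ this recovers the gonality bound $\floor*{\frac{\gamma+3}{2}}$ used by Yang-Yau, and for $n=2$ the degree bound used by Ros. The coefficient $\ceil*{\frac{5\gamma}{6}}$ in~\eqref{main:ineq} strongly suggests that the optimal choice is $n=5$, corresponding to $\frac{n}{n+1}=\frac{5}{6}$.

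Next, I would renormalize $\phi$ by the non-compact group $PU(n{+}1)$ of conformal automorphisms of $\CP^n$. Using a topological degree argument in the spirit of Hersch, one can choose a Möbius renormalization so that the associated matrix-valued function $\mu(\phi)=\frac{\phi\phi^*}{\abs{\phi}^2}$ satisfies $\int_\Sigma\brr{\mu(\phi)-\tfrac{1}{n+1}I}\,dv_g=0$ against the first eigenfunction weight. The off-diagonal and diagonal entries of $\mu(\phi)-\tfrac{1}{n+1}I$ can then be used as test functions in the variational characterization
$$
\lambda_1(\Sigma,g)\int_\Sigma f^2\,dv_g \leqslant \int_\Sigma\abs{\nabla f}^2\,dv_g.
$$
Summing over all $(n{+}1)^2-1$ entries and exploiting the conformal invariance of the Dirichlet energy together with the identity $\int_\Sigma\phi^*\omega_{FS}=\pi d$, one obtains a preliminary estimate of the form $\bar\lambda_1\leqslant C_n\cdot d_n(\gamma)$ that already surpasses Yang-Yau for many $\gamma$.

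To upgrade this to the sharper constants $13-\sqrt{15}$ and $33-4\sqrt{15}$ appearing in~\eqref{main:ineq}, I would follow Ros by breaking the test-function sum into two groups (trace-free diagonal part and off-diagonal part), assigning them different weights $\alpha$ and $\beta$, and then optimizing the weighted inequality. The weighted Cauchy-Schwarz together with the pointwise identities satisfied by the Fubini-Study pullback yields a quadratic constraint in $(\alpha,\beta)$ whose optimum produces an algebraic irrationality; for the value $n=5$ this irrationality should take the form $\sqrt{15}$, matching the stated constants. The second inequality in the theorem is then a routine analytic simplification of the first.

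The main obstacle is the quantitative refinement step: one must find the correct weighting of the test functions and a sharp pointwise inequality for the Fubini-Study metric on $\CP^n$ that survives integration. Verifying that $n=5$ is indeed the optimal value and checking the exceptional genera $\gamma\in\brc{4,6,8,10,14}$ (where the improvement fails) requires carefully comparing the $n$-dependent bound with the Yang-Yau bound $8\pi\floor*{\frac{\gamma+3}{2}}$ for each small $\gamma$, which is where the ceiling function and the integer-arithmetic of $d_n(\gamma)$ enter most delicately.
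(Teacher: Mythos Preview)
Your setup is broadly right: Brill--Noether gives a full map $\phi\colon\Sigma\to\CP^n$ of degree $d\leqslant\ceil*{\frac{n\gamma}{n+1}}+n$, you compose with the equivariant embedding $A\colon\CP^n\hookrightarrow HM_1(n{+}1)$ (your $\mu(\phi)$), and you balance by projective transformations. However, the refinement mechanism you propose---splitting the entries of $A_\phi-\tfrac{1}{n+1}I$ into trace-free diagonal and off-diagonal blocks and weighting them by $\alpha,\beta$---is not what produces the constants in~\eqref{main:ineq}, and in fact cannot yield any improvement over the Bourguignon--Li--Yau bound. The Dirichlet energy of any fixed linear combination of the coordinate functions of $A_\phi$ is still controlled by $\int_\Sigma|dA_\phi|_h^2\,dv_h=8\pi d$, so after optimizing $(\alpha,\beta)$ you only recover a multiple of $d$; the genus does not enter the numerator. (A minor side point: $PU(n{+}1)$ is compact; the balancing uses the non-compact family $P\in\{A>0,\ \tr A=1\}\subset GL(n{+}1,\mathbb{C})$.)

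The actual refinement in the paper, following Ros, is a \emph{second-order} perturbation of the map itself: one sets $\phi_a=A_\phi+aB$, where $B=-\Delta_h A_\phi$ is (twice) the mean curvature vector of the immersion $A_\phi$. The energy of $\phi_a$ then picks up the term $\langle\Delta_h B,B\rangle=8+2|\sigma|^2$, and $\int_\Sigma|\sigma|^2\,dv_h=8\pi(d+\gamma-1-\tfrac12\beta)$ by the Gauss equation and Gauss--Bonnet; this is how $\gamma$ enters the energy and why the bound genuinely improves on Yang--Yau. The balancing step is correspondingly more delicate: one must show that $\phi_a$ can still be centered at $\tfrac{1}{n+1}I$ for $|a|\leqslant\frac{1}{\sqrt{2n(n+1)}}$, which uses that $|B|=2$ pointwise together with the distance from $\tfrac{1}{n+1}I$ to $\partial\mathcal{H}$. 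Optimizing in $a$ under this constraint (for $n\geqslant5$ the optimum is at the endpoint $a=\frac{1}{\sqrt{60}}$) is what produces the $\sqrt{15}$, not a weighted Cauchy--Schwarz on coordinate blocks.
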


We show in Lemma~\ref{lemma:n<5} that the bound~\eqref{main:ineq} is an improvement over the Yang-Yau inequality~\eqref{YY:ineq} as soon as $\gamma\geqslant 25$. Furthermore, it is easy to compute the asymptotic behaviour of the r.h.s in~\eqref{main:ineq}. We formulate the corresponding result as follows.

\begin{corollary}
Let $\Sigma_\gamma$ be a compact orientable surface of genus $\gamma$. Then one has
\begin{equation}
\label{limsup:ineq}
\limsup_{\gamma\to\infty} \frac{\Lambda_1(\Sigma_\gamma)}{8\pi\gamma} \leqslant \frac{5}{6} -  \frac{89}{6(52-4\sqrt{15})}\approx 0.42703.
\end{equation}
\end{corollary}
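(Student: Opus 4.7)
The plan is to deduce the corollary directly from Theorem~\ref{th:the main theorem} by extracting the leading-order behaviour of the bound~\eqref{main:ineq}. First I would divide both sides of~\eqref{main:ineq} by $8\pi\gamma$, which yields
$$
\frac{\Lambda_1(\Sigma_\gamma)}{8\pi\gamma} \leqslant \frac{1}{4(13-\sqrt{15})}\brr{1 + (33-4\sqrt{15})\frac{\ceil*{5\gamma/6}}{\gamma} + \frac{4(41-5\sqrt{15})}{\gamma}}.
$$

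The elementary estimate $\ceil*{5\gamma/6} = 5\gamma/6 + O(1)$ shows that the bracketed expression converges as $\gamma \to \infty$, so I would then pass to $\limsup$ to obtain
$$
\limsup_{\gamma\to\infty} \frac{\Lambda_1(\Sigma_\gamma)}{8\pi\gamma} \leqslant \frac{1}{4(13-\sqrt{15})}\brr{1 + \frac{5(33-4\sqrt{15})}{6}} = \frac{171-20\sqrt{15}}{24(13-\sqrt{15})}.
$$

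Finally I would verify that the expression $\tfrac{5}{6} - \tfrac{89}{6(52-4\sqrt{15})}$ appearing in~\eqref{limsup:ineq} equals the same quantity. Using the common denominator $6(52-4\sqrt{15})=24(13-\sqrt{15})$, this reduces to the identity $5(52-4\sqrt{15})-89 = 171-20\sqrt{15}$, which is immediate; a numerical evaluation then confirms the approximation $0.42703$. Since the corollary is a purely formal consequence of Theorem~\ref{th:the main theorem} together with the asymptotics of the ceiling function, I do not foresee any real obstacle: the analytic content resides entirely in the main theorem, and only a brief algebraic simplification is needed here.
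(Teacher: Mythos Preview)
Your proposal is correct and follows the same approach as the paper: the corollary is treated there as an immediate consequence of Theorem~\ref{th:the main theorem} obtained by dividing~\eqref{main:ineq} by $8\pi\gamma$, using $\ceil*{5\gamma/6}=5\gamma/6+O(1)$, and letting $\gamma\to\infty$ (cf.\ the linear estimate~\eqref{ineq:F5}, whose leading coefficient is exactly $\tfrac{5}{6}-\tfrac{89}{6(52-4\sqrt{15})}$). Your algebraic verification that the two forms of the constant agree is correct.
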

At the same time, the Yang-Yau inequality only yields $\frac{1}{2}$ in the r.h.s of~\eqref{limsup:ineq}.
 
Finally, we remark that the bound~\eqref{main:ineq} is a particular member of a family of inequalities proved in  Proposition~\ref{ngamma:prop}. We choose to state Theorem~\ref{th:the main theorem} in its present form due to the fact that~\eqref{main:ineq} is the best bound in the family for $\gamma\geqslant 102$ and, thus, yields the best constant in~\eqref{limsup:ineq}. However, for small $\gamma$, other members of the family yield a better bound. In particular, this approach gives a quantitative improvement over the Yang-Yau inequality for all $\gamma$ except for $\gamma = 4,6,8,10,14$. We refer to Section~\ref{sec:computations} and Table~\ref{table:low_genera} for more details. 
 
 \subsection{Sketch of the proof} 
 The proof is based on a particular construction of a balanced map from the surface $\Sigma$ to the Euclidean sphere. Such maps are commonly used in the geometric optimization of eigenvalues, we refer e.g. to~\cite{BLY, Hersch, KS, LY,  NadirashviliS2} for applications to various problems. The coordinates of balanced maps are good test-functions for $\lambda_1$ provided the energy of the map is controlled. For example, in~\cite{BLY} the authors consider a particular minimal isometric embedding $A\colon \mathbb{CP}^n\to\mathbb{S}^{(n+1)^2-2}$. Precomposing $A$ with a full holomorphic map $f\colon\Sigma\to\mathbb{CP}^n$ gives the map $A_f=A\circ f$ to a sphere, whose energy is controlled by the degree of $f$. Finally, they use linear transformations on $\mathbb{CP}^n$ to arrange $A_f$ to be balanced. Among other things, this construction can be used to prove~\eqref{YY:ineq}.
 In~\cite{Ros:main article} Ros builds up on this construction by considering a perturbation $\phi_a = A_f+a H_f$, where $H_f$ is the mean curvature vector of $A_f$ and $a\in\mathbb{R}$ is a parameter. He then takes $f\colon \Sigma\to\mathbb{CP}^2$ to be the quartic realization of any non-hyperelliptic genus $3$ surface $\Sigma$ and argues that for certain range of $a$ the map $\phi_a$ can still be arranged to be balanced. Finally, it turns out that the parameter $a$ can be chosen so that the energy $E(\phi_a)<E(\phi_0)$, thus, yielding an improvement over~\eqref{YY:ineq}.
 
We generalize the argument of Ros by considering $f$ to be a full map induced by an arbitrary linear system of dimension $n$. The Brill-Noether theory provides an existence of such linear systems with degree bounded in terms of the genus of $\Sigma$. This allows to extend Ros' approach to surfaces of arbitrary genus. An additional upside is that one is free to choose $n$. In fact, $n=5$ turns out to be optimal for large $\gamma$ and corresponds to the bound~\eqref{main:ineq}.
A novel feature compared to~\cite{Ros:main article} is that our maps $f$ could have branch points. We show that the contribution of the branch points to the energy $E(\phi_a)$ has a negative sign and, thus, could be discarded in the proof of upper bound. 

In conclusion, let us provide some geometric intuition for the definition of $\phi_a$. Since the goal is to minimize the energy of a balanced map, it is natural to deform the map in the direction of the negative gradient of the energy. 
Since $A_f$ is a conformal map, this gradient coincides with the mean curvature vector. Thus, the correspondence $A_f\mapsto \phi_a$ can be seen as one step of the discrete harmonic map heat flow or, equivalently, mean curvature flow. The surprising part of the calculation in~\cite{Ros:main article} is that the energy of $\phi_a$ only depends on $a$, genus $\gamma$ and the degree of $f$. It would be interesting to see if this method can be refined further by considering two steps of the discrete flow or a continuous flow.

 \subsection*{Organization of the paper} In section~\ref{sec:holo} we prove a general upper bound for the first eigenvalue in terms of a holomorphic map to $\mathbb{CP}^n$, see Theorem~\ref{th:implicit estimate of lambda}. The content of this section is to the large extent a direct generalization of the results in~\cite{Ros:main article}. Section~\ref{sec:computations} is devoted to investigating the bound obtained in Theorem~\ref{th:implicit estimate of lambda}. Namely, we show that taking $n=5$ yields the best bound for large $\gamma$ and show that in this case the bound reduces to~\eqref{main:ineq}. Table~\ref{table:low_genera} contains the optimal values of $n$ for small $\gamma$. The section is elementary, but computationally heavy. 
 
 \subsection*{Acknowledgements} The authors would like to thank I. Polterovich and A. Penskoi for fruitful discussions. The first author is partially supported by NSF grant DMS-1363432.

\section{Eigenvalue bounds from holomorphic maps to $\CP^n$}

\label{sec:holo}

In the following $(\Sigma,g)$ denotes an orientable Riemannian surface of genus $\gamma$. The metric $g$ (together with a choice of an orientation) induces a complex structure $\Sigma$ and, thus, we view $\Sigma$ as a Riemann surface  throughout this section. The complex structure only depends on the conformal class $[g] = \{e^{2\omega}g,\,\omega\in C^\infty(M)\}$. Furthermore, our first result, Theorem~\ref{th:implicit estimate of lambda} gives a bound on 
$$
\Lambda_1(\Sigma,[g]) := \sup_{h\in [g]}\bar\lambda_1(\Sigma,g).
$$
The study of $\Lambda_1(\Sigma,[g])$ is of independent interest due to its connection with the theory of harmonic maps, see e.g.~\cite{KNPP}.

\subsection{Bracnhed holomorphic curves in complex projective spaces}
Let $f\colon \Sigma \to \CP^n$ be a holomorphic map. One can define the order of the differential at a point $p$ denoted by $\ord_p df$ as the order of the vector function $f'_z$, where $df = f'_{z}dz$ in some local coordinates on $\Sigma$ and $\CP^n.$ The total ramification of $f$ is defined as 
\begin{equation}\label{eq:total ramification}
\beta =  \sum\limits_{p} \ord_p df = \sum\limits_{p} (\ord_p f - 1). 
\end{equation}
If it happens to be nonzero, the metric $h = f^*g_{FS}$ induced by the Fubini-Study metric has conical singularities, i.e. in a local coordinate $z$ centered at $p$ the metric is given by $h = \abs{z}^{2k} \rho^{2} dz \otimes d \bar{z},$ where $\rho(0) \neq 0,\ k = \ord_p f'_{z}.$ Here we assume that $g_{FS}$ is normalized to have holomorphic sectional curvature $1$. Denote its Gauss curvature by $K$ and the volume form by $dv_h$. After some calculations, we have
\[
K dv_h = 4 \partial_{z} \partial_{\bar{z}} \log (\abs{z}^{k} \rho) dx \wedge dy = 4 \partial_{z} \partial_{\bar{z}} \log \rho \, dx \wedge dy,
\]
which means the Gauss curvature is integrable. Hence, the following formula holds, see e.g.~\cite{Gauss-Bonnet formula},
\begin{equation}\label{eq:Gauss-Bonnet formula}
\frac{1}{2 \pi} \int_{\Sigma} K dv_h=2 - 2g + \beta.
\end{equation}

        The degree of the map $f\!: \Sigma \to \CP^n$ is defined as the positive integer $d$ that corresponds to the integral homology class $f_*[\Sigma] \in H_2(\CP^n) \simeq \mathbb{Z}.$ Let $\omega_{FS}$ be the K\"ahler $2$-form associated with Fubiny-Study metric. Then $\frac{1}{4\pi}\omega_{FS}$ represents the canonical basis in integral cohomology $H^2(\CP^n).$ Since $f^*(\omega_{FS}) = dv_h,$ we conclude that
\begin{equation}\label{eq:degree-volume relation}
\area(\Sigma,h) = 4\pi d.
\end{equation}

 We are interested in finding full holomorphic maps of relatively small degree. Recall that $f\colon\Sigma\to\CP^n$ is called {\em full} if its image is not contained in a hyperplane $\CP^{n-1}\subset\CP^n$. For this purpose, we use Brill–Noether theory, see e.g. section Special Linear Systems IV in~\cite{GH}, which assures the existence of a linear system of degree $d'$ and dimension $n$ as long as $\gamma \geqslant(n+1)(\gamma+n-d').$ Removing the base points if necessary, we obtain a holomorphic map $f\colon \Sigma \to \CP^n$ of degree $d \leqslant d'.$ This implies the following lemma.
\begin{lemma}\label{lem:a holomorphic map}
For any positive $n$ there exists a full holomorphic map $f\colon \Sigma \to \CP^n$ of degree $d$ such that 
\[
	d \leqslant \ceil*{\frac{n\gamma}{n+1}} + n.
\]
\end{lemma}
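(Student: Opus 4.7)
The proof is essentially the computation indicated in the paragraph just before the lemma, so the plan is to unpack the Brill--Noether inequality and verify that the chosen degree satisfies it.

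My plan is as follows. First, I would invoke the Brill--Noether existence theorem (Kempf, Kleiman--Laksov) in the form stated in the paper: on \emph{every} compact Riemann surface $\Sigma$ of genus $\gamma$, there exists a linear system $\mathfrak{g}^{n}_{d'}$ of projective dimension $n$ and degree $d'$ whenever the Brill--Noether number $\rho(\gamma, n, d') = \gamma - (n+1)(\gamma + n - d')$ is nonnegative, i.e.\ whenever $\gamma \geqslant (n+1)(\gamma+n-d')$.

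Next, I would solve this inequality for $d'$. Rearranging gives $(n+1)(d'-n)\geqslant n\gamma$, equivalently $d' \geqslant n + \frac{n\gamma}{n+1}$. Since $d'$ must be an integer, the smallest admissible value is
\[
d' \;=\; n + \ceil*{\frac{n\gamma}{n+1}},
\]
and Brill--Noether produces a linear system $\mathfrak{g}^{n}_{d'}$ on $\Sigma$ for this value.

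Finally, I would pass from the linear system to the desired map. The system $\mathfrak{g}^{n}_{d'}$ corresponds to an $(n+1)$-dimensional subspace $V\subset H^0(\Sigma, L)$ of sections of some line bundle $L$ of degree $d'$; choosing a basis $s_0, \dots, s_n$ of $V$ defines a holomorphic map $\Sigma\setminus B\to \CP^n$, $p\mapsto [s_0(p):\cdots:s_n(p)]$, where $B$ is the finite base locus. Factoring out the divisor of common zeros gives a base-point-free linear system of the same projective dimension $n$ but of some degree $d\leqslant d'$, which extends to a holomorphic map $f\colon \Sigma\to\CP^n$. Because the sections $s_0, \dots, s_n$ are linearly independent, the image of $f$ cannot lie in any hyperplane, so $f$ is full. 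Combining with the bound on $d'$ gives $d\leqslant d' = \ceil*{\frac{n\gamma}{n+1}}+n$, which is the required inequality.

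There is no real obstacle: the only step that requires care is making sure to quote Brill--Noether in the "every curve" form (so that the statement holds for the surface $\Sigma$ fixed at the beginning of the section, not merely a generic one) and to verify that removing base points preserves both the projective dimension and the fullness while only possibly decreasing the degree. Both points are standard.
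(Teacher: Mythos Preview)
Your proposal is correct and follows precisely the argument sketched in the paragraph preceding the lemma: invoke Brill--Noether existence to get a $\mathfrak{g}^n_{d'}$ with $d' = n + \ceil{n\gamma/(n+1)}$, remove base points to obtain a full map of degree $d\leqslant d'$. Your write-up is slightly more explicit than the paper's (you spell out the Kempf/Kleiman--Laksov ``every curve'' form and the fullness from linear independence of sections), but the approach is identical.
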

 
\subsection{An embedding of $\CP^n$ into the Euclidean space}
Denote by $HM_1(m)$ the space of Hermitian operators of trace~1 acting on $\mathbb{C}^m,$ and consider the Euclidean metric  
\[
\langle A, B\rangle=2 \tr A B \quad \forall A, B \in \HM1.
\]
Then $\CP^n$ can be considered as the space of all orthogonal projectors of rank~1 in $\HM1.$ Using the matrix notation, this embedding $A\colon\CP^n\to HM_1(n+1)$  is given by the formula
\begin{equation}\label{eq:canonical embedding}
z \mapsto \frac{1}{z^* z} z z^*,
\end{equation}
where $z$ is a column of homogeneous coordinates and $z^* = \bar{z}^{t}.$ This embedding is also $U(n\!+\!1)\text{-equivariant}$, i.e. it intertwines the actions
\begin{equation}
(P, z) \mapsto P z, \qquad(P, A) \mapsto P A P^{*}
\end{equation}
on $\CP^n$ and $HM_1(n+1)$ respectively.

In what follows, we will slightly abuse the notation and denote by the same letter $A$ both the embedding $A\colon \CP^n \to \HM1$ and the points of $\CP^n$ regarded as a subset of $ \HM1$. The map $A$ has been studied in detail in~\cite{Ros: Spec geom of CR}. The following properties are of particular importance to us,
\begin{enumerate}
\item
the map $A$ is an isometry. Recall that $\CP^n$ is endowed with the Fubini-Study metric of holomorphic section curvature 1;
\item the image of $\CP^n$ is a minimal submanifold of the sphere $S^{(n+1)^2 - 2} \subset \HM1$ centered at $\frac{1}{n+1}I$ with radius $\sqrt{\frac{2n}{n+1}},$ where $I$ is the identity matrix.
\end{enumerate}

Now let $f\colon \Sigma \to \CP^n$ be a holomorphic curve. The composition of the previous embedding with $f$ produces the map $A_f\colon \Sigma \to \HM1.$ The map $A_f$ is an immersion on the complement $\mathring\Sigma$ to the set of the branched points of $f.$ Thus, on this open set $\mathring\Sigma,$ one can apply all the local formulae from differential geometry by considering $\mathring\Sigma$ as an immersed Riemannian manifold with the induced metric $h$, also denoted by $\langle \cdot, \cdot\rangle.$

Let $\sigma$ be the second fundamental form of the immersion $f\!: \mathring\Sigma \to \CP^n.$ Since $\CP^n$ with the Fubini–Study metric is a Kähler manifold, its Levi-Civita connection commutes with the complex structure, i.e. $J \nabla = \nabla J.$ As a consequence, we have 
\begin{equation}\label{eq:compat. 2d f.f. and J}
\sigma(J X, Y)=\sigma(X, J Y)=J \sigma(X, Y).
\end{equation}

An isometric immersion of a Riemannian manifold $M$ into another one $\overline{M},$ yields the relation between their sectional curvatures given as follows,
\begin{equation}\label{eq:sec.curv. relation}
K(\xi, \eta)=\overline{K}(\xi, \eta)+\frac{\langle\sigma(\xi, \xi), \sigma(\eta, \eta)\rangle-|\sigma(\xi, \eta)|^{2}}{|\xi|^{2}|\eta|^{2}-\langle\xi, \eta\rangle^{2}},
\end{equation}
where $\xi, \eta \in T_p M$ are linearly independent. Take $M = \mathring\Sigma$, $\overline{M} = \CP^n$ and 
$\eta = J\xi$ so that $\{\xi,\eta\}$ form an orthonormal basis of $T_pM$. Then one obtains that $K(\xi, \eta) = K$ is just the Gaussian curvature of $\mathring\Sigma$ and $\overline{K}(\xi, \eta)=1$ is the holomorphic sectional curvature of $\CP^n$ with the Fubini–Study metric. Applying~\eqref{eq:compat. 2d f.f. and J}, one sees that $\sigma(\xi, \xi) = -\sigma(\eta, \eta)$ and $\sigma(\xi, \eta) = J \sigma(\xi, \xi).$ This immediately implies
\begin{equation}\label{eq:Gauss equation}
K=1-\frac{1}{2}|\sigma|^{2}.
\end{equation}

Since $\HM1$ is an affine space, the coordinate-wise Hessian of $A_f$ coincides with its second fundamental form. In particular, we have 
\begin{equation}
-\Delta A_f = \tr \Hess A_f = 2H =: B,
\end{equation} 
where $H$ stands for mean curvature of the map $A_f\!: \mathring\Sigma \to \HM1.$ 
 
\begin{lemma}\label{lem:A-B relations}
The following relations hold:
\[
\left\{\begin{aligned}
&|I|^{2}=2(n+1), &\quad \langle &A_f, I\rangle=2, &\quad &|A_f|^{2}=2, \\
&\langle B, I\rangle=0, &\quad \langle &B, A_f\rangle=-2, &\quad &|B|^{2}=4, \\
&\langle\Delta_h B, A_f\rangle=-4, &\quad \langle&\Delta_h B, B\rangle=8+2|\sigma|^{2}.
\end{aligned}\right.
\]
\end{lemma}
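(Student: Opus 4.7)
The plan is to organize the nine identities into three tiers according to how many Laplacians appear. The first row is purely algebraic: since $A_f$ is a trace-one orthogonal projector, $A_f^2 = A_f$ and $\tr A_f = 1$, which combined with $\langle X, Y\rangle = 2\tr(XY)$ give $|I|^2 = 2(n+1)$, $\langle A_f, I\rangle = 2\tr A_f = 2$, and $|A_f|^2 = 2\tr A_f^2 = 2$ on the spot. Two of the second-row identities then follow by applying the matrix-valued product rule
\[
\Delta \langle U, V\rangle = \langle \Delta U, V\rangle + \langle U, \Delta V\rangle - 2\langle \nabla U, \nabla V\rangle
\]
to these constants. Together with $|\nabla A_f|^2 = \dim \Sigma = 2$ (from $A_f$ being an isometric immersion on $\mathring\Sigma$) and $\Delta A_f = -B$, this yields $\langle B, I\rangle = 0$ and $\langle B, A_f\rangle = -2$ immediately.

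For $|B|^2 = 4$ I would use geometry instead of algebra. Since $f \colon \mathring\Sigma \to \CP^n$ is holomorphic into the K\"ahler manifold $\CP^n$ it is minimal, while $A \colon \CP^n \to S^{(n+1)^2-2} \subset \HM1$ is a minimal immersion into a sphere of radius $r = \sqrt{2n/(n+1)}$ centered at $\tfrac{1}{n+1}I$. Combining these, the mean curvature vector $H = B/2$ of $A_f$ in $\HM1$ splits orthogonally as $B = T - \tfrac{n+1}{n}\bigl(A_f - \tfrac{1}{n+1}I\bigr)$, where $T$ is tangent to the ambient sphere but normal to $\CP^n$. The norm $|T|^2$ is a function only of the complex line $f_*T_p\Sigma \subset T_{f(p)}\CP^n$, and by the $U(n+1)$-equivariance of $A$ is independent of the line; a short computation at $[1:0:\ldots:0]$ gives $|T|^2 = 2(n-1)/n$, hence $|B|^2 = 2(n-1)/n + 2(n+1)/n = 4$.

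The third row comes from a second application of the product rule. From $\Delta \langle B, A_f\rangle = 0$ I obtain $\langle \Delta B, A_f\rangle = |B|^2 + 2\langle \nabla B, \nabla A_f\rangle$. Since $B$ is normal to $A_f(\mathring\Sigma)$ in $\HM1$, the Weingarten formula gives $\langle \nabla_{e_i} B, dA_f(e_j)\rangle = -\langle B, \widetilde\sigma(e_i, e_j)\rangle$, where $\widetilde\sigma$ denotes the second fundamental form of the immersion $A_f$; tracing over an orthonormal frame $\{e_i\}$ then yields $\langle \nabla B, \nabla A_f\rangle = -\langle B, B\rangle = -4$, and so $\langle \Delta B, A_f\rangle = -4$.

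For the final relation I start from $\Delta |B|^2 = 0$, which gives $\langle \Delta B, B\rangle = |\nabla B|^2$. Splitting each $\nabla_{e_i} B$ into its tangential part (shape operator $A_B(e_i)$) and normal part $\nabla^\perp_{e_i} B$ produces
\[
|\nabla B|^2 = \sum_{i,j} \langle B, \widetilde\sigma(e_i, e_j)\rangle^2 + |\nabla^\perp B|^2.
\]
The first summand is determined pointwise by the complex line $f_*T_p\Sigma$, so by $U(n+1)$-equivariance it is a constant, which evaluating at $[1:0:\ldots:0]$ identifies with $8$. The main obstacle is then to show $|\nabla^\perp B|^2 = 2|\sigma|^2$: since $B$ is a function of the complex line $f_*T_p\Sigma$ alone, its infinitesimal variation in the normal bundle of $A_f(\mathring\Sigma)$ is governed by how this line rotates in $T\CP^n$ along $\Sigma$, and the latter rotation is measured precisely by the second fundamental form $\sigma$ satisfying~\eqref{eq:compat. 2d f.f. and J}. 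Combining this identification with the Gauss equation $K = 1 - \tfrac12|\sigma|^2$ of~\eqref{eq:Gauss equation} would close the proof.
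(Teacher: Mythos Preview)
Your first seven identities are handled correctly, and your approach is more explicit than the paper's, which simply records that the first row comes from the definition of the embedding, that $\langle B,I\rangle=0$ follows from $\tr B=0$, and then cites Lemma~3.2 of Ros~\cite{Ros: Spec geom of Kaeh} for everything else. Your product-rule/Weingarten computations for $\langle B,I\rangle$, $\langle B,A_f\rangle$, $\langle\Delta_h B,A_f\rangle$ are clean, and the equivariance argument for $|B|^2$ is sound (the action of $U(n+1)$ is transitive on pairs consisting of a point and a complex tangent line).

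The genuine gap is in the last identity $\langle\Delta_h B,B\rangle=8+2|\sigma|^2$. Your reduction to $|\nabla B|^2$ and the splitting into shape-operator and normal-connection parts are fine, and the equivariance argument does show that $\sum_{i,j}\langle B,\widetilde\sigma(e_i,e_j)\rangle^2$ is a universal constant (because $B$ is normal to $\CP^n$ and hence only the $II_{\CP^n}$-component of $\widetilde\sigma$ contributes). But the assertion $|\nabla^\perp B|^2=2|\sigma|^2$ is only a heuristic: $B$ depends on the \emph{pair} $(f(p),\,f_*T_p\Sigma)$, not on the line alone, so ``rotation of the line is measured by $\sigma$'' does not by itself pin down the coefficient. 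Your equivariance reasoning gives at most $|\nabla^\perp B|^2=C|\sigma|^2+D$ for universal constants; testing on a linear $\CP^1\subset\CP^n$ (where $\sigma=0$ and $B=-2(A_f-\tfrac12 I)$ is radial, hence $\nabla^\perp B=0$) shows $D=0$, but you still owe a computation fixing $C=2$, e.g.\ by evaluating on a conic or by using that $II_{\CP^n}$ is parallel. Finally, the reference to the Gauss equation~\eqref{eq:Gauss equation} at the end is a red herring: once $|\nabla^\perp B|^2=2|\sigma|^2$ is established the identity follows directly, with no need for $K=1-\tfrac12|\sigma|^2$.
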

\begin{proof}
The first row follows from the definition of the isometric embedding $\CP^n \subset \HM1.$ The next equation is obtained by noticing that $\tr B = 0$ for any $B \in  T_A\!\HM1.$ The rest follows from Lemma~3.2 in~\cite{Ros: Spec geom of Kaeh}.
\end{proof}

From formulae~\eqref{eq:Gauss-Bonnet formula},~\eqref{eq:degree-volume relation}, and~\eqref{eq:Gauss equation} one has,
\begin{equation}\label{eq:main integrals}
\left\{\begin{aligned}
&\int_{\Sigma} |\sigma|^{2} dv_h = 8\pi \left(d + \gamma - 1 - \frac{1}{2}\beta\right),\\
&\int_{\Sigma} 1 dv_h = 4\pi d.
\end{aligned}\right.
\end{equation}

Fix a point $a \in \mathbb{R}$ and consider the map $\phi_a\!: \mathring\Sigma \to \HM1,$
\[
\phi_a(A) = A + a B(A).
\]
\begin{lemma}\label{lem:phi_a properties}
The image of $\phi_a$ lies in the sphere with center at $\frac{1}{n+1} I$, namely,
\[
\left|\phi_{a}-\frac{1}{n+1} I\right|^{2} = (2a - 1)^2 + \frac{n-1}{n+1}.
\]
The energy of $\phi_a$ is
\[
\int_{\Sigma}\left|d\phi_{a}\right|_h^{2} d v_h = 8\pi d \left[(2a - 1)^{2}+2 a^{2}\delta \right],
\]
where $\delta = 1 + \frac{g-1-\frac{1}{2}\beta}{d}\geqslant 0.$ 

\par In particular, the energy remains invariant under projective transformations of $\Sigma$ in $\CP^n.$ Furthermore, $\phi_a$ regarded as a vector-valued function belongs to the Sobolev space $W^{1,2}(\Sigma, g)$ for all smooth metrics $g$  compatible with the complex structure on $\Sigma$.
\end{lemma}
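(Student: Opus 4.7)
The plan is to establish the four assertions in order, each reducing to direct manipulation of the identities assembled in Lemma~\ref{lem:A-B relations} and the integrals~\eqref{eq:main integrals}. For the first identity I would simply expand $\phi_a - \tfrac{1}{n+1}I = (A_f - \tfrac{1}{n+1}I) + aB$ bilinearly, using Lemma~\ref{lem:A-B relations} to read off $|A_f - \tfrac{1}{n+1}I|^2 = \tfrac{2n}{n+1}$, $\langle A_f - \tfrac{1}{n+1}I, B\rangle = -2$, and $|B|^2 = 4$; the sum $4a^2 - 4a + \tfrac{2n}{n+1}$ rearranges to $(2a-1)^2 + \tfrac{n-1}{n+1}$.

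For the energy I would expand $|d\phi_a|_h^2 = |dA_f|_h^2 + 2a\langle dA_f, dB\rangle_h + a^2|dB|_h^2$ pointwise on $\mathring\Sigma$ and integrate the three pieces separately. Since $A_f$ is an isometric immersion of $(\mathring\Sigma, h)$ the first integrand equals $2$, with integral $8\pi d$ by~\eqref{eq:degree-volume relation}. Integration by parts converts the cross term into $\int\langle\Delta_h B, A_f\rangle\,dv_h = -16\pi d$ via $\langle\Delta_h B, A_f\rangle = -4$, and the quadratic term into $\int\langle\Delta_h B, B\rangle\,dv_h = \int(8 + 2|\sigma|^2)\,dv_h = 48\pi d + 16\pi(\gamma - 1 - \tfrac{\beta}{2})$ by the same lemma together with~\eqref{eq:main integrals}. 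Assembling the three contributions and factoring $8\pi d$ produces exactly $(2a-1)^2 + 2a^2\delta$ with the stated $\delta$, whose non-negativity is immediate from $|\sigma|^2\geqslant 0$ and the first line of~\eqref{eq:main integrals}.

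For the projective and conformal invariance, note that the derived formula depends only on $a$ and on the integers $d$, $\gamma$, $\beta$. Each of these is preserved when $f$ is replaced by $T\circ f$ for $T\in\mathrm{PGL}(n+1,\mathbb{C})$, since $T$ is a biholomorphism of $\CP^n$ and thus fixes both the degree $f_*[\Sigma]$ and the orders $\ord_p df$; hence the energy is projectively invariant. Two-dimensional conformal invariance of the Dirichlet energy then extends the equality to any smooth metric $g$ in the conformal class of $h$, which is precisely the class of metrics compatible with the complex structure on $\Sigma$. Finally $\phi_a\in W^{1,2}(\Sigma,g)$ because it is smooth on $\mathring\Sigma$, has image contained in a sphere in $\HM1$ (by the first identity), and has finite Dirichlet energy.

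The only technical point that requires care is the integration by parts in the presence of the branch points of $f$: there $h$ acquires conical singularities and the derivatives of $B$ can blow up, even though $B$ itself is pointwise bounded ($|B|^2 = 4$). My plan is to excise $\varepsilon$-discs around each branch point, apply Stokes on the resulting smooth compact surface with boundary, and let $\varepsilon\to 0$. The local normal form $z\mapsto cz^{k+1}$ at a branch point of order $k$ makes the $h$-length of $\partial B_\varepsilon(p)$ of order $\varepsilon^{k+1}$, which dominates the at most $\varepsilon^{-k}$ growth of $|dB|_h$, so the boundary contributions vanish in the limit. Everything outside of this step is linear algebra applied to Lemma~\ref{lem:A-B relations}.
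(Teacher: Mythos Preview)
Your proposal is correct and follows essentially the same route as the paper: expand using the identities in Lemma~\ref{lem:A-B relations}, then apply~\eqref{eq:main integrals}. The only cosmetic difference is that the paper writes the energy as $\int_\Sigma\langle\Delta_h\phi_a,\phi_a\rangle\,dv_h=\int_\Sigma\langle -B+a\Delta_hB,\,A_f+aB\rangle\,dv_h$ in one step and then expands the pointwise inner product to $2(1-2a)^2+2a^2|\sigma|^2$, whereas you expand $|d\phi_a|^2$ into three pieces first and integrate each by parts; the computations are termwise identical. Your explicit treatment of the integration by parts across the conical singularities is a point the paper leaves implicit.
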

\begin{proof}
Both conclusion are obtained using Lemma~\ref{lem:A-B relations}. For the first one we have
\begin{equation}
\begin{split}
&\left|\phi_{a}-\frac{1}{n+1} I\right|^{2}=\left|a B+\left(A_f-\frac{1}{n+1} I\right)\right|^{2}= \\
&=a^{2}|B|^{2}+2 a\langle B, A_f\rangle + 
\left|A_f-\frac{1}{n+1} I\right|^{2} = 4a^{2} - 4a + 1 + \frac{n-1}{n+1}.
\end{split}
\end{equation}

For the second, we observe that
\begin{equation}\label{eq:nabla_phi}
\begin{split}
\int_\Sigma\left|d\phi_{a}\right|_h^{2}\,dv_h&=\int_\Sigma\left\langle\Delta_h \phi_{a}, \phi_{a}\right\rangle\,dv_h=\int_\Sigma\langle -B+a \Delta_h B, A_f+a B\rangle\,dv_h=\\
&=\int_\Sigma 2(1-2a)^2 + 2a^2 \abs{\sigma}^2\,dv_h.
\end{split}
\end{equation}
An application of formulae~\eqref{eq:main integrals} completes the proof of the identities.

To show that $\phi_a\in W^{1,2}(\Sigma,g)$ it is sufficient to observe that the Dirichlet integral is conformally invariant and that $\phi_a$ is a bounded function.
\end{proof}

\subsection{The center of mass} Let us denote by $\mathcal{H}$ the convex hull of $\CP^n \subset \HM1.$ Recalling the fact that $\CP^n$ is embedded to $\HM1$ as a set of all one-dimensional orthogonal projectors and the fact that Hermitian operators are diagonalizable, one has:
\begin{itemize}
\item
$\mathcal{H}=\left\{A \in \HM1 | \ A \geqslant 0\right\},$ where $A \geqslant 0$ means that $A$ is positive semi-definite;
\item
$int \, \mathcal{H}=\left\{A \in \HM1 | \ A > 0\right\};$
\item
$\partial \mathcal{H}=\{A \in \mathcal{H} \ | \rk A \leqslant n\}.$ 
\end{itemize}
Using these facts, we can prove the following lemma.
\begin{lemma}\label{lem:center mass}
The distance between $\partial \mathcal{H}$ and the point $\frac{1}{n+1}I$ equals $\sqrt{\frac{2}{n(n+1)}}.$
\end{lemma}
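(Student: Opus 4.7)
The plan is to parametrize $\partial\mathcal{H}$ by eigenvalues and reduce the problem to minimizing a quadratic form under linear constraints. Concretely, for any $A \in \mathcal{H}$ with eigenvalues $\lambda_1, \ldots, \lambda_{n+1}$, I would first compute
\[
\left|A - \tfrac{1}{n+1}I\right|^2 = 2 \tr\!\left(A - \tfrac{1}{n+1}I\right)^{\!2} = 2 \tr A^2 - \tfrac{4}{n+1}\tr A + \tfrac{2}{(n+1)^2}\tr I = 2 \sum_{i=1}^{n+1} \lambda_i^2 - \tfrac{2}{n+1},
\]
using that $\tr A = 1$ and $\tr I = n+1$. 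So minimizing the distance to $\partial\mathcal{H}$ amounts to minimizing $\sum \lambda_i^2$ subject to the constraints imposed by membership in $\partial\mathcal{H}$.

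Next, I would invoke the description of $\partial\mathcal{H}$ established just above the lemma: $A \in \partial\mathcal{H}$ iff $A$ is positive semidefinite with $\tr A = 1$ and $\rk A \leq n$. Spectrally, this means $\lambda_i \geq 0$, $\sum_i \lambda_i = 1$, and at least one eigenvalue vanishes. Thus without loss of generality we may reindex so $\lambda_{n+1} = 0$ and the problem reduces to
\[
\min\Bigl\{\, \sum_{i=1}^n \lambda_i^2 \;:\; \lambda_i \geq 0,\ \sum_{i=1}^n \lambda_i = 1 \,\Bigr\}.
\]
By the Cauchy--Schwarz inequality $\sum_{i=1}^n \lambda_i^2 \geq \frac{1}{n}\brr{\sum_{i=1}^n \lambda_i}^2 = \frac{1}{n}$, with equality iff $\lambda_1 = \cdots = \lambda_n = \frac{1}{n}$.

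Substituting this minimum value back yields
\[
\min_{A \in \partial\mathcal{H}} \left|A - \tfrac{1}{n+1}I\right|^2 = \tfrac{2}{n} - \tfrac{2}{n+1} = \tfrac{2}{n(n+1)},
\]
which gives the claimed distance. The minimum is attained, for instance, on the rank-$n$ projector $\frac{1}{n}(I - P)$ where $P$ is any rank-one orthogonal projector; this certifies that the infimum is realized, so the computation really produces a distance. No real obstacles are anticipated — the argument is essentially a three-line spectral calculation once the description of $\partial\mathcal{H}$ is in hand; the only mild subtlety is remembering the factor of $2$ in the inner product $\langle A,B\rangle = 2\tr(AB)$, which is what produces the $2$ in the numerator.
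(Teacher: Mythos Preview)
Your proof is correct and takes essentially the same approach as the paper: both reduce the problem to the eigenvalues of $A$ and minimize $\sum\lambda_i^2$ over the face $\lambda_{n+1}=0$ of the probability simplex. The only cosmetic difference is that the paper invokes the $U(n\!+\!1)$-action to diagonalize and then appeals to the geometric fact that the nearest point on a face of a simplex to its center is the center of that face, whereas you pass to eigenvalues directly and use Cauchy--Schwarz; these are two phrasings of the same one-line optimization.
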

\begin{proof}
Let $A \in \partial \mathcal{H}$ be a point that realizes the distance. As $U(n\!+\!1)$ acts by isometries, we may suppose that $A$ is diagonal, i.e.
\begin{equation}
A = \begin{pmatrix}
0 		& 	0     	& \cdots & 0 		\\
0 		& 	a_1 	& \cdots & 0 		\\
\vdots  & 	\vdots  & \ddots & \vdots \\
0 		& 	0 		& \cdots & a_n
\end{pmatrix}, \quad a_i \geqslant 0, \ \sum_{i} a_i = 1.
\end{equation}
Remark that $\frac{1}{n+1}I$ is the center of the $n$-dimensional simplex, $A$ lies on a one of its faces, and so $A$ must be the center of the face, i.e. $a_i = \frac{1}{n}.$ Thus, 
\begin{equation}
\abs{A - \frac{1}{n+1}I}^2 = 2\left(\frac{1}{(n+1)^2} + n \frac{1}{(n+1)^2 n^2} \right) = \frac{2}{n(n+1)}.
\end{equation}
\end{proof}

Any point $P \in int \, \mathcal{H}\subset\HM1$ defines a projective transformation of $\CP^n\!.$ If the map $A_f\!: \Sigma \to \HM1$ corresponds to the holomorphic map $f\!: \Sigma \to \CP^n,$ then we denote by $A_{f_P}$ the map corresponding to $f_P=P \circ f.$ Let $B_P$ be its mean curvature vector.

Given a metric $g$ on $\Sigma$ we define the map $\Phi_{a}\colon int \, \mathcal{H} \to \HM1$ by the formula 
\begin{equation}
\Phi_{a}(P)=\frac{1}{\operatorname{Area}\left(\Sigma, g\right)} \int_{\Sigma}\left(A_{f_P}+a B_{P}\right) d v_g.
\end{equation}
To each transformation $P$ the map $\Phi_a$ assigns the center of mass of $A_{f_P}+a B_{P}$ with respect to the metric $g$. Our goal is to show that there exists $P_0$ such that $\Phi_a(P_0) = \frac{1}{n+1}I$.

We consider $a$ as a parameter. Lemma~\ref{lem:phi_a properties} implies that $A_{f_P}+a B_{P}$ is bounded provided $a$ bounded. It follows from the dominated convergence theorem that the map $\Phi$ depends continuously on $a$ and $P.$ The case $a=0$ is studied in~\cite{BLY} and it turns out that the map $\Phi_{0}$ possesses the following properties,
\begin{itemize}
\item
it extends to the continuous map $\Phi_{0}\!: \mathcal{H} \to \HM1;$
\item
$\Phi_{0}(\partial\mathcal{H}) \subset \partial\mathcal{H},$ and the restriction of this extension $\Phi_{0}{\big|}_{\partial\mathcal{H}}$ has non-zero degree.
\end{itemize}
\begin{remark}
This is the only point of the proof that requires $f$ to be full. The fullness assumption is used in~\cite{BLY} to prove the properties above. 
\end{remark}
\begin{lemma}\label{lem:a useful point}
If $\abs{a} < \frac{1}{\sqrt{2n(n+1)}},$ then there exists $P_0\in int \, \mathcal{H}$ such that $\Phi_a(P_0) = \frac{1}{n+1}I.$
\end{lemma}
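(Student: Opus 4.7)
The plan is a degree-theoretic perturbation argument leveraging the fact, recalled from~\cite{BLY}, that $\Phi_0\big|_{\partial\mathcal{H}}$ has non-zero degree as a self-map of $\partial\mathcal{H}$. The hypothesis on $|a|$ will make $\Phi_a$ a uniformly small perturbation of $\Phi_0$, strictly smaller than the distance from $\partial\mathcal{H}$ to $\frac{1}{n+1}I$ computed in Lemma~\ref{lem:center mass}. By a standard degree / homotopy invariance argument, this will force $\Phi_a$ to still attain $\frac{1}{n+1}I$ somewhere in the interior.

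The first step is the a priori bound. Since $|B_P|^{2}=4$ pointwise by Lemma~\ref{lem:A-B relations}, the triangle inequality gives
\[
\left|\Phi_a(P) - \Phi_0(P)\right| = \left|\frac{a}{\area(\Sigma,g)}\int_\Sigma B_P\, dv_g\right| \leqslant 2|a|.
\]
The hypothesis $|a| < \tfrac{1}{\sqrt{2n(n+1)}}$ is equivalent to $2|a| < \sqrt{\tfrac{2}{n(n+1)}}$, which by Lemma~\ref{lem:center mass} is strictly less than the distance from $\partial\mathcal{H}$ to $\frac{1}{n+1}I$.

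To set up the degree argument while avoiding possible irregularity of $\Phi_a$ at $\partial\mathcal{H}$, I would work with the shrunken set
\[
\mathcal{H}_\epsilon = \left\{(1-\epsilon)P + \tfrac{\epsilon}{n+1}I \,:\, P \in \mathcal{H}\right\} \subset int\,\mathcal{H},
\]
on which $\Phi_a$ is continuous. By uniform continuity of $\Phi_0$ on the compact set $\mathcal{H}$, for $\epsilon$ small $\Phi_0\big|_{\partial\mathcal{H}_\epsilon}$ is uniformly close to $\Phi_0\big|_{\partial\mathcal{H}} \subset \partial\mathcal{H}$; in particular its distance from $\frac{1}{n+1}I$ still exceeds $2|a|$. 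The straight-line homotopy $(1-t)\Phi_0 + t\Phi_a$ restricted to $\partial\mathcal{H}_\epsilon$ then avoids $\frac{1}{n+1}I$ for all $t\in[0,1]$, so $\Phi_a\big|_{\partial\mathcal{H}_\epsilon}$ has the same winding number around $\frac{1}{n+1}I$ as $\Phi_0\big|_{\partial\mathcal{H}_\epsilon}$, which in turn inherits the non-zero degree of $\Phi_0\big|_{\partial\mathcal{H}}$ under the natural identification $\partial\mathcal{H} \simeq \partial\mathcal{H}_\epsilon$. Standard degree theory forces the continuous map $\Phi_a\big|_{\mathcal{H}_\epsilon}$ to hit $\frac{1}{n+1}I$ at some $P_0 \in \mathcal{H}_\epsilon \subset int\,\mathcal{H}$.

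The main obstacle is the topological bookkeeping in the last paragraph: $\Phi_a$ is not obviously continuous on $\partial\mathcal{H}$, so a naïve homotopy from $\Phi_0$ to $\Phi_a$ on $\partial\mathcal{H}$ is unavailable, and one must carry out the argument on the shrunken set $\mathcal{H}_\epsilon$ and justify that the degree on $\partial\mathcal{H}_\epsilon$ is non-zero via comparison with $\Phi_0\big|_{\partial\mathcal{H}}$.
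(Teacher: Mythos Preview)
Your proposal is correct and follows essentially the same approach as the paper: the paper also passes to the shrunken set $\mathcal{H}_\varepsilon$, uses the pointwise bound $|B_P|=2$ to get $|\Phi_a(P)-\Phi_0(P)|\leqslant 2|a|<\mathrm{dist}(\partial\mathcal{H},\tfrac{1}{n+1}I)$, and then invokes the homotopy from $\Phi_a|_{\partial\mathcal{H}_\varepsilon}$ to $\Phi_0|_{\partial\mathcal{H}}$ in $HM_1(n+1)\setminus\{\tfrac{1}{n+1}I\}$ together with the nonzero degree from~\cite{BLY}. Your write-up is, if anything, slightly more explicit about the straight-line homotopy and the identification $\partial\mathcal{H}\simeq\partial\mathcal{H}_\varepsilon$.
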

\begin{proof}
We "shrink" $\mathcal{H}$ a little bit and call it $\mathcal{H}_{\varepsilon},$
\[
\mathcal{H}_{\varepsilon}=\left\{(1-\varepsilon) P+\varepsilon\frac{1}{n+1}I \, \big| \, P \in \mathcal{H}\right\} \subset int \, \mathcal{H}.
\]
We claim that $\Phi_a(\partial \mathcal{H}_{\varepsilon})$ does not contain $\frac{1}{n+1} I$ when $\varepsilon$ is small enough. Indeed, if $\Phi_a(P_\varepsilon) = \frac{1}{n+1}I$, then one has 
\[
0 = \Phi_a(P_\varepsilon) - \tfrac{1}{n+1}I = \left(\Phi_0(P_\varepsilon) - \frac{1}{n+1}I\right) + \frac{a}{\operatorname{Area}\left(\Sigma, g\right)} \int_{\Sigma}B_{P_\varepsilon} d\mu_g,
\]
so by Lemma~\ref{lem:A-B relations} 
\[
\abs{\Phi_0(P_\varepsilon) - \frac{1}{n+1}I} \leqslant 2\abs{a} < dist \! \brr{\partial \mathcal{H}, \frac{1}{n+1}I},
\]
but $\Phi_0(P_\varepsilon) \to \partial \mathcal{H}$ as $\varepsilon \to 0,$ which will be a contradiction.

It remains to notice that $\Phi_{a}\!: \partial\mathcal{H}_\varepsilon \to \HM1 \backslash \brc{\tfrac{1}{n+1}I}$ homotopic to $\Phi_{0}\colon\partial\mathcal{H} \to \partial\mathcal{H},$ therefore, has the same degree, which is non-zero by~\cite{BLY}. Hence, $\Phi_a(\mathcal{H}_\varepsilon)$ must contain $\frac{1}{n+1}I,$ and this concludes the proof.
\end{proof}

\subsection{Upper bound on $\lambda_1$.}
Finally, we are ready to prove the upper bound on $\Lambda_1(\Sigma,[g])$.
Recall that 
\begin{equation}
\label{eq:Rayleigh quotient}
\lambda_{1}(\Sigma, g)=\inf \left\{\frac{\displaystyle\int_{\Sigma}|d\varphi|^{2} d v_g}{\displaystyle\int_{\Sigma} \varphi^{2} dv_g} \,,\, \varphi \in W^{1,2}(\Sigma, g)\backslash \{0\}, \ \int_{\Sigma} \varphi dv_g = 0\right\}.
\end{equation}

The following theorem is a direct generalization of the bound of Ros in~\cite{Ros:main article}.
\begin{theorem}
\label{th:implicit estimate of lambda}
Let $(\Sigma,g)$ be a compact oriented Riemannian surface of genus $\gamma$ endowed with the compatible complex structure. Suppose that there exists a full holomorphic map $f\colon\Sigma \to \CP^n$  with total ramification $\beta = \sum\limits_{p} (\ord_p f - 1).$ 
Then for any $\abs{a}\leqslant \frac{1}{\sqrt{2n(n+1)}}$ one has 
\begin{equation}
\label{general:ineq}
\Lambda_1(\Sigma,[g])\leqslant 8\pi \deg(f) \left(1 + \frac{2a^{2}\delta - \frac{n-1}{n+1}}{(2a - 1)^2 + \frac{n-1}{n+1}}\right),
\end{equation}
where $\delta = 1 + \frac{\gamma-1-\frac{1}{2}\beta}{\deg(f)}$.
\end{theorem}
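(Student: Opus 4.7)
The plan is to estimate $\lambda_1(\Sigma, g)$ by feeding the coordinate functions of a suitably balanced map $\phi_a$ into the Rayleigh quotient~\eqref{eq:Rayleigh quotient}, and then use conformal invariance to upgrade the bound to $\Lambda_1(\Sigma,[g])$.

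Concretely, apply Lemma~\ref{lem:a useful point}, which is available precisely because $\abs{a} \leq \tfrac{1}{\sqrt{2n(n+1)}}$, to produce $P_0 \in int\, \mathcal{H}$ with $\Phi_a(P_0) = \tfrac{1}{n+1}I$. Set $\psi := \phi_{a,P_0} - \tfrac{1}{n+1}I$, where $\phi_{a,P_0} := A_{f_{P_0}} + a B_{P_0}$. By construction $\int_\Sigma \psi\, dv_g = 0$, and expanding $\psi$ in any orthonormal basis of the hyperplane of trace-zero Hermitian matrices produces real-valued coordinates, each of which lies in $W^{1,2}(\Sigma, g)$ (Lemma~\ref{lem:phi_a properties}) and has zero mean; hence each is admissible in~\eqref{eq:Rayleigh quotient}. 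Summing the resulting scalar inequalities over the coordinates gives the vector-valued bound
$$\lambda_1(\Sigma, g) \int_\Sigma \abs{\psi}^2\, dv_g \,\leq\, \int_\Sigma \abs{d\psi}_g^2\, dv_g.$$

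Now evaluate both sides using Lemma~\ref{lem:phi_a properties}. On the left, $\abs{\psi}^2 = (2a-1)^2 + \tfrac{n-1}{n+1}$ is pointwise constant, so the left-hand side equals this constant times $\area(\Sigma, g) \cdot \lambda_1(\Sigma, g) = \bar\lambda_1(\Sigma,g) \cdot \left((2a-1)^2 + \tfrac{n-1}{n+1}\right)$. On the right, conformal invariance of the Dirichlet integral in two dimensions lets us replace $g$ by $h = f_{P_0}^* g_{FS}$, and the second identity of the same lemma, applied to $f_{P_0}$ (which has the same degree $d$ and ramification $\beta$ as $f$, since projective transformations preserve both), yields $8\pi d\brs{(2a-1)^2 + 2a^2\delta}$. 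Dividing and observing that the resulting bound depends only on the conformal class $[g]$ so it persists under the supremum defining $\Lambda_1(\Sigma,[g])$, an elementary algebraic rearrangement puts the result in the form~\eqref{general:ineq}.

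The main conceptual obstacle---producing a balanced $\phi_a$ for nontrivial $a$---is already surmounted in Lemma~\ref{lem:a useful point}, and the restriction $\abs{a} \leq \tfrac{1}{\sqrt{2n(n+1)}}$ in the theorem is precisely the hypothesis required to invoke that lemma. A minor technical remark: although $\phi_{a,P_0}$ is only smooth on the complement $\mathring\Sigma$ of the branch locus of $f_{P_0}$, Lemma~\ref{lem:phi_a properties} guarantees $\phi_{a,P_0} \in W^{1,2}(\Sigma, g)$, and the branch contribution is fully accounted for by $\beta$ in the Gauss--Bonnet-type formula~\eqref{eq:Gauss-Bonnet formula} underlying the energy computation, so no further care is needed.
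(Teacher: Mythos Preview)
Your proposal is correct and follows the same approach as the paper: balance $\phi_a$ via Lemma~\ref{lem:a useful point}, use its coordinates as test functions in the Rayleigh quotient, and evaluate both sides using the identities of Lemma~\ref{lem:phi_a properties} together with conformal invariance of the Dirichlet energy. One small technical point: Lemma~\ref{lem:a useful point} is stated only for the strict inequality $\abs{a} < \frac{1}{\sqrt{2n(n+1)}}$, so the boundary case $\abs{a} = \frac{1}{\sqrt{2n(n+1)}}$ is not covered directly; the paper handles this by noting that the right-hand side of~\eqref{general:ineq} is continuous in $a$, and you should add the same remark.
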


\begin{proof}
Fix $\abs{a} < \frac{1}{\sqrt{2n(n+1)}}$ and consider a metric $g \in [g].$ We use Lemma~\ref{lem:a useful point} and replace $f$ with $f_P.$ It has the same degree and total ramification, but in addition, one has
\[
\int_{\Sigma}\left(\phi_{a}-\frac{1}{n+1} I\right) dv_g=0.
\]
This allows us to use the coordinates of $\left(\phi_{a}-\tfrac{1}{n+1}\right)$ as test-functions in~\eqref{eq:Rayleigh quotient}, which are in $W^{1,2}(\Sigma, g)$ by Lemma~\ref{lem:phi_a properties}. Let $G(a)$ be the r.h.s in~\eqref{general:ineq}.
Thus, by identities in  Lemma~\ref{lem:phi_a properties} one obtains
\[
\lambda_{1}(\Sigma, g) \leqslant \frac{\displaystyle\int_{\Sigma}\left|d\phi_{a}\right|_g^{2} dv_g}{\displaystyle\int_{\Sigma}\left|\phi_{a}-\frac{1}{n+1} I\right|^{2} dv_g} = \frac{\displaystyle\int_{\Sigma}\left|d\phi_{a}\right|_h^{2} d v_h}{\displaystyle\int_{\Sigma}\left|\phi_{a}-\frac{1}{n+1} I\right|^{2} d v_g}=\frac{G\left(a\right)}{\area(\Sigma, g)},
\]
where we used taht $\left|\phi_{a}-\tfrac{1}{n+1} I\right|$ is constant and the energy of $\phi_{a}$ does not change within the class of conformal metrics. The case of $\abs{a} = \frac{1}{\sqrt{2n(n+1)}}$ follows from the continuity of $F(a)$.
\end{proof}

\section{The proof of the main theorem}

\label{sec:computations}

To effectively apply inequality~\eqref{general:ineq} one needs to find full holomorphic maps $f\colon \Sigma\to\mathbb{CP}^n$ of low degree. Such maps are given by Lemma~\ref{lem:a holomorphic map}. Note that the r.h.s of~\eqref{general:ineq} is increasing in $\deg(f)$ and decreasing in $\beta$, therefore one has the following proposition.
\begin{proposition}
\label{ngamma:prop}
Let $\Sigma_\gamma$ be an orientable surface of genus $\gamma$. Then for any $n\in \mathbb{Z}$, $n>0$  and any $|a|\leqslant \frac{1}{\sqrt{2n(n+1)}}$ one has 
$$
\Lambda_1(\Sigma_\gamma) \leqslant 8\pi d\left(1 + \frac{2a^{2}\delta - \frac{n-1}{n+1}}{(2a - 1)^2 + \frac{n-1}{n+1}}\right) =: F(a,n,\gamma),
$$
where $d= d(n,\gamma) = \ceil*{\frac{n\gamma}{n+1}} + n$, $\delta = \delta(n,\gamma) = 1 + \frac{\gamma-1}{d}$. 
\end{proposition}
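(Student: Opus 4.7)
The plan is to combine the existence result of Lemma~\ref{lem:a holomorphic map} with the conformal-class upper bound of Theorem~\ref{th:implicit estimate of lambda}, and then verify the monotonicity claim made in the paragraph preceding the proposition so as to replace $\deg(f)$ by its upper bound $d(n,\gamma)$ and $\beta$ by $0$.

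First, I would fix a conformal class $[g]$ on $\Sigma_\gamma$. Lemma~\ref{lem:a holomorphic map} supplies a full holomorphic map $f\colon\Sigma_\gamma\to\CP^n$ (with respect to the complex structure induced by $[g]$) whose degree $d_f$ satisfies $d_f\leqslant d(n,\gamma)$; its total ramification $\beta$, as defined in~\eqref{eq:total ramification}, is a nonnegative integer. Then Theorem~\ref{th:implicit estimate of lambda} applies and, for any $|a|\leqslant \frac{1}{\sqrt{2n(n+1)}}$, yields
$$
\Lambda_1(\Sigma_\gamma,[g])\leqslant 8\pi d_f\left(1+\frac{2a^{2}\delta_f-\frac{n-1}{n+1}}{(2a-1)^{2}+\frac{n-1}{n+1}}\right),
$$
where $\delta_f=1+\frac{\gamma-1-\beta/2}{d_f}$.

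The heart of the proof is the monotonicity claim: the right-hand side is increasing in $d_f$ and decreasing in $\beta$. To check this I would set $C=(2a-1)^2+\frac{n-1}{n+1}>0$, substitute $d_f\delta_f=d_f+\gamma-1-\beta/2$, and rearrange to obtain the affine expression
$$
8\pi d_f\left(1+\frac{2a^{2}\delta_f-\frac{n-1}{n+1}}{C}\right)=\frac{8\pi}{C}\Bigl[d_f(6a^{2}-4a+1)+2a^{2}(\gamma-1-\beta/2)\Bigr].
$$
The discriminant of $6a^2-4a+1$ is $16-24<0$, so this quadratic in $a$ is strictly positive; the coefficient of $d_f$ is therefore positive, as is the coefficient of $-\beta/2$ (which equals $\frac{8\pi a^2}{C}$). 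Consequently, replacing $d_f$ by the (possibly larger) value $d=d(n,\gamma)$ and $\beta$ by $0$ only enlarges the bound, producing exactly $F(a,n,\gamma)$.

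Finally, since $[g]$ was arbitrary and $\Lambda_1(\Sigma_\gamma)=\sup_{[g]}\Lambda_1(\Sigma_\gamma,[g])$ with the bound $F(a,n,\gamma)$ independent of $[g]$, taking the supremum yields the proposition. There is no real obstacle here: the only thing requiring verification beyond citing Lemma~\ref{lem:a holomorphic map} and Theorem~\ref{th:implicit estimate of lambda} is the positivity of $6a^2-4a+1$, which certifies the monotonicity on which the replacement $d_f\rightsquigarrow d$, $\beta\rightsquigarrow 0$ rests.
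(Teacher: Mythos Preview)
Your proposal is correct and follows the paper's approach exactly: combine Lemma~\ref{lem:a holomorphic map} with Theorem~\ref{th:implicit estimate of lambda}, use the monotonicity of the right-hand side in $\deg(f)$ and $\beta$ to pass to $d(n,\gamma)$ and $\beta=0$, and take the supremum over conformal classes. The paper asserts the monotonicity without justification; your explicit rewriting as $\frac{8\pi}{C}\bigl[d_f(6a^{2}-4a+1)+2a^{2}(\gamma-1-\beta/2)\bigr]$ together with the negative-discriminant check is a clean way to supply that missing verification.
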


The remainder of this section is devoted to choosing the values $(a,n)$ that lead to the best bound for each fixed $\gamma$. 
One first observes that for $n=1$ the minimum of $F(a,1,\gamma)$ is achieved for $a=0$ and the resulting bound is the Yang-Yau bound~\eqref{YY:ineq}. In the following we assume $n\geqslant 2$.

\begin{lemma}
For fixed $(n,\gamma)$, the function $F(a,n,\gamma)$ has exactly two critical points on $\mathbb{R}$, a local minimum at 
$$
a_{\min} = a_{\min}(n,\gamma) = \frac{\xi-\sqrt{\xi^2-2(n^2-1)\delta}}{2 \delta(n+1)}
$$
and a local maximum at
$$
a_{\max} = a_{\max}(n,\gamma) = \frac{\xi+\sqrt{\xi^2-2(n^2-1)\delta}}{2 \delta(n+1)},
$$
where $\xi = n\delta + (n-1)$. Furthermore, $a_{\max}>\frac{1}{\sqrt{2n(n+1)}}$. Thus, the minimum of $F(a,n,\gamma)$ on $\left[-\frac{1}{\sqrt{2n(n+1)}}, \frac{1}{\sqrt{2n(n+1)}}\right]$ is achieved either at $a=a_{\min}$ or at $a=\frac{1}{\sqrt{2n(n+1)}}$.
\end{lemma}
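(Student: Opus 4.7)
The plan is to treat $F(a,n,\gamma)$ as a smooth rational function of $a$ (with $n,\gamma$ fixed) and locate its critical points explicitly. Setting $c = (n-1)/(n+1)$, $N(a) = 2a^{2}\delta - c$ and $D(a) = (2a-1)^{2}+c$, we have $F = 8\pi d\,(1 + N/D)$. Since $n\geqslant 2$ forces $c > 0$, the denominator $D$ is bounded away from zero and $F \in C^{\infty}(\mathbb{R})$. The quotient rule reduces $F'(a) = 0$ to $N'(a)D(a) = N(a)D'(a)$. Expanding $4a\delta[(2a-1)^{2}+c] = 4(2a-1)(2a^{2}\delta - c)$ and collecting, using the identity $(n+1)\bigl(\delta(1+c)+2c\bigr) = 2(n\delta + n-1) = 2\xi$, one obtains the quadratic
\[
\delta(n+1)\,a^{2} - \xi\,a + \tfrac{n-1}{2} = 0, \qquad \xi = n\delta + n - 1,
\]
whose two roots are precisely the claimed $a_{\min}$ and $a_{\max}$. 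Note that $\delta>0$ for every $\gamma\geqslant 0$ and $n\geqslant 2$, so the equation is genuinely quadratic.

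The discriminant $\xi^{2} - 2(n^{2}-1)\delta$ simplifies to $n^{2}\delta^{2} - 2(n-1)\delta + (n-1)^{2}$, which as a quadratic in $\delta$ has discriminant $4(n-1)^{2}(1-n^{2})<0$ for $n\geqslant 2$, hence is strictly positive. Thus $F'=0$ has two distinct real roots, which by the computation above exhaust all critical points of $F$. Since $\lim_{a\to\pm\infty}F(a,n,\gamma) = 8\pi d(1+\delta/2)$ is finite, one of these critical points is a local maximum and the other a local minimum. Vieta applied to the quadratic forces both roots positive (product $\frac{n-1}{2\delta(n+1)}$, sum $\frac{\xi}{\delta(n+1)}$), and the standard sign table for $F'$ on $(-\infty,a_{\min}), (a_{\min},a_{\max}), (a_{\max},\infty)$ identifies $a_{\min}$ as the local minimum and $a_{\max}$ as the local maximum.

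The main quantitative step is the estimate $a_{\max} > \tfrac{1}{\sqrt{2n(n+1)}}$. Since $a_{\min}<a_{\max}$ strictly,
\[
a_{\max} > \tfrac{1}{2}(a_{\min}+a_{\max}) = \frac{\xi}{2\delta(n+1)},
\]
so it suffices to show $\frac{\xi}{2\delta(n+1)} \geqslant \tfrac{1}{\sqrt{2n(n+1)}}$. Clearing denominators, this is equivalent to $n\delta + n - 1 \geqslant \delta\sqrt{2(n+1)/n}$, i.e.\ $\bigl(n - \sqrt{2(n+1)/n}\bigr)\delta + (n-1) \geqslant 0$. The elementary inequality $n^{3}\geqslant 2(n+1)$ is valid for every $n\geqslant 2$ and makes the coefficient of $\delta$ non-negative, while the constant term $n-1$ is non-negative; the estimate follows.

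Finally, because $a_{\min}>0>-\tfrac{1}{\sqrt{2n(n+1)}}$ and $a_{\max}>\tfrac{1}{\sqrt{2n(n+1)}}$, the closed interval $\bigl[-\tfrac{1}{\sqrt{2n(n+1)}},\tfrac{1}{\sqrt{2n(n+1)}}\bigr]$ contains at most one critical point, namely $a_{\min}$. A short case split completes the argument: if $a_{\min}\leqslant \tfrac{1}{\sqrt{2n(n+1)}}$, then $F$ decreases then increases on the interval and the minimum is attained at $a_{\min}$; otherwise $F$ is strictly decreasing throughout the interval and the minimum is at the right endpoint $\tfrac{1}{\sqrt{2n(n+1)}}$. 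The only genuine (and still mild) obstacle is the careful algebraic reduction to the quadratic in the first paragraph and the verification of the sharp inequality in the third; everything else is sign bookkeeping.
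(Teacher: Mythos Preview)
Your proof is correct and follows essentially the same line as the paper's: write $\partial_a F$ as a quadratic over $D(a)^2$, read off the two roots via the quadratic formula, and bound $a_{\max}$ from below by the midpoint $\xi/(2\delta(n+1))$. You supply details the paper leaves implicit (positivity of the discriminant, the Vieta/sign-table identification of which root is the minimum, and the closing case split on the interval), and your estimate $\xi/(2\delta(n+1))\geqslant 1/\sqrt{2n(n+1)}$ via $n^3\geqslant 2(n+1)$ replaces the paper's numerical chain $\xi/(2\delta(n+1))>n/(2(n+1))\geqslant 1/3>1/(2\sqrt{3})\geqslant 1/\sqrt{2n(n+1)}$; both routes pass through the same midpoint and are equally elementary.
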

\begin{proof}
The derivative $\partial_aF(a,n,\gamma)$ has the form $\frac{p_{n,\gamma(a)}}{q_{n,\gamma}(a)^2}$, where $p_{n,\gamma}(a)$ is a quadratic polynomial in $a$ and $q_{n,\gamma}(a) = (2a - 1)^2 + \frac{n-1}{n+1}$ never vanishes. Therefore, the first assertion follows from the formula for the roots of a quadratic polynomial.

Recall that we are assuming $n\geqslant 2$, therefore, one has
$$
a_{\max}\geqslant \frac{\xi}{2\delta(n+1)}>\frac{n}{2(n+1)}\geqslant \frac{1}{3}>\frac{1}{2\sqrt{3}}>\frac{1}{\sqrt{2n(n+1)}}.
$$
\end{proof}

\begin{lemma} For any $\gamma\geqslant 0$ and $n\geqslant 5$ one has
$a_{\min}(n,\gamma) > \frac{1}{\sqrt{2n(n+1)}}$. Furthermore, $a_{\min}<\frac{1}{\sqrt{2n(n+1)}}$ provided that 
(a) $n=2$, $\gamma\geqslant 0$; (b) $n=3$, $\gamma\geqslant 4$; (c) $n=4$, $\gamma\geqslant 40$.
\end{lemma}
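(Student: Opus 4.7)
The plan is to reduce the comparison of $a_{\min}(n,\gamma)$ with $c:=1/\sqrt{2n(n+1)}$ (set $u:=2c$) to a single threshold inequality on $\delta=\delta(n,\gamma)$, and then to verify that inequality in each of the stated regimes. Throughout I will make use of the identities $u^2 n(n+1)=2$ and $u(n+1)=\sqrt{2(n+1)/n}<n$ for $n\ge 2$.

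Starting from the closed-form expression for $a_{\min}$, the inequality $a_{\min}\le c$ is equivalent to $\xi-u\delta(n+1)\le\sqrt{\xi^2-2(n^2-1)\delta}$, where $\xi=n\delta+(n-1)$. A short check gives $\xi-u\delta(n+1)=\delta\bigl(n-u(n+1)\bigr)+(n-1)>0$ under the standing hypothesis $n\ge 2$, so one may square both sides. Clearing the squares with the help of $u^2n(n+1)=2$ yields the clean equivalence
\[
a_{\min}\le c\iff \delta\ge T(n):=\frac{n(n-1)(1-u)}{un^2-1}.
\]

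For the first assertion ($n\ge 5$) I would combine two bounds. From Lemma~\ref{lem:a holomorphic map} one has $d\ge n\gamma/(n+1)+n$, which after a short manipulation gives $\delta(n,\gamma)<(2n+1)/n$ uniformly in $\gamma\ge 0$. Separately, a direct evaluation yields $T(5)=(48\sqrt{15}-20)/61>11/5$, and for $n\ge 6$ one verifies that $T(n)$ is increasing in $n$ (with leading behaviour $T(n)\sim n/\sqrt{2}$ coming from $u\sim\sqrt{2/n^2}$) while $(2n+1)/n$ is decreasing; hence $T(n)>(2n+1)/n>\delta(n,\gamma)$ for every $n\ge 5$, which says exactly $a_{\min}>c$.

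For the second assertion ($n\in\{2,3,4\}$) the direction is reversed. Using $d<n\gamma/(n+1)+n+1$, which follows from $\lceil x\rceil<x+1$, one obtains the monotonically increasing lower bound
\[
\delta(n,\gamma)>1+\frac{(n+1)(\gamma-1)}{n\gamma+(n+1)^2}\qquad(\gamma\ge 1).
\]
Combined with the explicit values $T(2)=2(3\sqrt 3-1)/13\approx 0.646$, $T(3)=(16\sqrt 6-6)/25\approx 1.33$, $T(4)=(30\sqrt{10}-12)/41\approx 2.02$, this reduces each of (a)--(c) to an elementary arithmetic verification at the smallest admissible $\gamma$. The main source of friction is that $\delta$ itself is not monotone in $\gamma$ because of the ceiling in the formula for $d$, which is exactly why one has to pass through a continuous surrogate; and the case $n=4$, $\gamma=40$ leaves a rather narrow margin ($\delta\approx 2.05$ against $T(4)\approx 2.02$), so the arithmetic must be done honestly rather than via coarse estimates.
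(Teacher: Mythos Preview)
Your approach is essentially the same as the paper's: both reduce the comparison $a_{\min}\lessgtr c$ to the threshold inequality $\delta\gtrless T(n)$ (your $T(n)=\frac{n(n-1)(1-u)}{un^2-1}$ is precisely the paper's expression $\frac{\sqrt{n}(n-1)(\sqrt{n(n+1)}-\sqrt{2})}{n\sqrt{2n}-\sqrt{n+1}}$), then bound $\delta$ above by $2+\tfrac{1}{n}$ for $n\ge 5$ and below by an increasing rational function of $\gamma$ for $n\in\{2,3,4\}$. The only tactical difference is that the paper establishes $T(n)>2+\tfrac1n$ for all $n\ge 5$ via an explicit chain of elementary inequalities, whereas you check $T(5)>\tfrac{11}{5}$ directly and appeal to monotonicity of $T(n)$, which you should justify more carefully since your sketch only asserts it.
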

\begin{proof}
The condition $a_{\min} < \frac{1}{\sqrt{2n(n+1)}}$ can be rewritten as follows
\[
\xi - \sqrt{2\frac{n+1}{n}}\delta < \sqrt{\xi^2-2(n^2-1)\delta}.
\]
Provided $n \geqslant 2,$ one can take squares, and direct calculation yield
\begin{equation}
\label{ineq: delta_1 constrain}
\frac{\sqrt{n} (n-1)\brr{\sqrt{n(n+1)} - \sqrt{2}}}{n\sqrt{2n} - \sqrt{n+1}} < \delta.
\end{equation}

Note that for any $\gamma\geqslant 0$ the inequality $\delta(n,\gamma)\leqslant 2+\frac{1}{n}$ holds. Thus, for $n\geqslant 5$ one has
\begin{equation}
\begin{split}
&\frac{\sqrt{n} (n-1)\brr{\sqrt{n(n+1)} - \sqrt{2}}}{n\sqrt{2n} - \sqrt{n+1}} \geqslant \frac{(n-1)\brr{\sqrt{n(n+1)} - \sqrt{2}}}{\sqrt{2}n-1}\geqslant\\
&\geqslant \frac{\sqrt{n(n+1)} - \sqrt{2}}{\sqrt{2}}\geqslant 2+\frac{1}{5}\geqslant 2 + \frac{1}{n}\geqslant\delta(n,\gamma)
\end{split}
\end{equation}
for any $\gamma\geqslant 0$. As a result, by~\eqref{ineq: delta_1 constrain} one obtains $a_{\min}>\frac{1}{\sqrt{2n(n+1)}}$ for $n\geqslant 5$.

Similarly, for all $\gamma\geqslant 0$ one has $\delta(n,\gamma)\geqslant 1+\frac{(n+1)(\gamma-1)}{n\gamma + (n+1)(n+2)}$. For a fixed $n$ the r.h.s is an increasing function of $\gamma$, therefore, as long as
\begin{equation}
\label{eq:1}
\frac{\sqrt{n} (n-1)\brr{\sqrt{n(n+1)} - \sqrt{2}}}{n\sqrt{2n} - \sqrt{n+1}} < 1+\frac{(n+1)(\gamma_0-1)}{n\gamma_0 + (n+1)(n+2)}
\end{equation}
the condition $a_{\min}(n,\gamma)<\frac{1}{\sqrt{2n(n+1)}}$ is satisfied for $\gamma\geqslant \gamma_0$. Finally, a direct calculation shows that~\eqref{eq:1} holds for $(n,\gamma_0) = (2,0),(3,4),(4,40)$.
\end{proof}
\begin{remark}
By a direct computation using~\eqref{ineq: delta_1 constrain} one can see that, in fact, $a_{\min}<\frac{1}{\sqrt{2n(n+1)}}$ for $n=3,$ $\gamma\geqslant 3$ and $n=4,$ $\gamma\geqslant 30$. This observation is not necessary for the following arguments, since the low genus case is treated separately in Table~\ref{table:low_genera}.
\end{remark}

Having determined the optimal value of $a$, we define
\begin{equation}
F_n(\gamma) = 
\begin{cases}
F\left(\frac{1}{\sqrt{2n(n+1)}},n,\gamma\right),&\text{if $n\geqslant 5$;}\\
F(a_{\min}(n,\gamma),n,\gamma), &\text{if} \ n=2; n=3,\gamma\geqslant 4; n=4,\gamma\geqslant 40;\\
F(0,1,\gamma) = 8\pi\left(\ceil*{\frac{\gamma}{2}}+1\right),&\text{if $n=1$,}
\end{cases}
\end{equation} 
where $F_1(\gamma)$ is the r.h.s in the Yang-Yau inequality~\eqref{YY:ineq}.

A direct computation yields that 
\begin{equation}
\label{eq:F5}
F_5(\gamma) = \frac{2\pi}{13-\sqrt{15}} \brr{\gamma + \brr{33-4\sqrt{15}}\ceil*{\frac{5\gamma}{6}} + 4\brr{41-5\sqrt{15}}}.
\end{equation}
Estimating $\ceil*{x}\leqslant x+1$ in~\eqref{eq:F5} gives the following useful estimate,
\begin{equation}
\label{ineq:F5}
\frac{F_5(\gamma)}{8\pi}\leqslant  \left(\frac{89}{6(4\sqrt{15}-52)}+\frac{5}{6}\right)\gamma+\left(\frac{115}{4 \sqrt{15}-52}+6\right)  =: a\gamma + b,
\end{equation}
where $a\leqslant 0.4271$, $b\leqslant 2.8501$.

The following lemma states that taking $n\geqslant 6$ is never optimal.
\begin{lemma}
For any $\gamma\geqslant 0$ and any $n> 5$ one has
$$
F_n(\gamma)> F_5(\gamma).
$$
\end{lemma}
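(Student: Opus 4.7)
The plan is to produce a linear-in-$\gamma$ lower bound on $F_n(\gamma)$ for $n\geq 6$ and compare it with the linear upper bound on $F_5(\gamma)$ already established in~\eqref{ineq:F5}. First, substituting $a=1/\sqrt{2n(n+1)}$ into the formula for $F(a,n,\gamma)$ from Proposition~\ref{ngamma:prop} and using the identity $d\delta=d+\gamma-1$, one rewrites $F_n(\gamma)$ for $n\geq 5$ as
\[
F_n(\gamma)/(8\pi) = dA_n + (\gamma-1)B_n,
\]
where $D_n=2((n^2+1)-\sqrt{2n(n+1)})$, $A_n=(n^2+n+3-2\sqrt{2n(n+1)})/D_n$, $B_n=1/D_n$, and $d=\ceil*{n\gamma/(n+1)}+n$. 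Dropping the ceiling via $\ceil*{x}\geq x$ yields the linear lower bound
\[
F_n(\gamma)/(8\pi) \geq \mu_n\gamma + c_n, \qquad \mu_n := \frac{nA_n}{n+1}+B_n,\quad c_n := nA_n-B_n.
\]

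Combined with $F_5(\gamma)/(8\pi) \leq 0.4271\gamma+2.8501$ from~\eqref{ineq:F5}, the desired inequality $F_n(\gamma)>F_5(\gamma)$ for all $\gamma\geq 0$ reduces to the two scalar estimates
\[
\mu_n > 0.4271 \quad\text{and}\quad c_n > 2.8501 \qquad \text{for every } n\geq 6.
\]
I would establish both through a monotonicity argument: show that $(\mu_n)$ and $(c_n)$ are strictly increasing for $n\geq 5$, so it suffices to verify the case $n=6$. For $n=6$ one has $\sqrt{2n(n+1)}=2\sqrt{21}$, hence $\mu_6=(277-24\sqrt{21})/[7(74-4\sqrt{21})]$ and $c_6=(269-24\sqrt{21})/(74-4\sqrt{21})$. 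Each of the two required bounds then becomes an inequality between rational numbers after clearing denominators and squaring to eliminate $\sqrt{21}$, which can be checked directly.

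The main obstacle is the monotonicity of $\mu_n$ and $c_n$. The plan is to treat $n$ as a continuous parameter on $[5,\infty)$ and compute $\partial_n\mu_n$ and $\partial_n c_n$; after rationalizing the expressions against $\sqrt{2n(n+1)}$, each derivative becomes a ratio of polynomials in $n$ whose positivity on $[5,\infty)$ is a routine polynomial inequality. An alternative route (avoiding calculus) is to verify $\mu_{n+1}>\mu_n$ and $c_{n+1}>c_n$ by clearing common denominators and squaring twice to eliminate both $\sqrt{2n(n+1)}$ and $\sqrt{2(n+1)(n+2)}$, reducing the problem to polynomial statements in $n$. Either route is elementary but computationally heavy, in keeping with the spirit of Section~\ref{sec:computations}.
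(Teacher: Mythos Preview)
Your proposal is correct and follows essentially the same route as the paper: your linear lower bound $\mu_n\gamma+c_n$ coincides with the paper's $a_n\gamma+b_n$ (one checks $\mu_n=a_n$ and $c_n=b_n$ after expanding), and the reduction to the two scalar inequalities against the coefficients of~\eqref{ineq:F5} is identical. The only difference is in verifying those scalar inequalities for $n\geqslant 6$: the paper avoids the monotonicity argument by using the crude bounds $a_n\geqslant \tfrac{1}{2}-\tfrac{1}{n+1}$ and $b_n\geqslant \tfrac{n-5}{2}$, which dispose of all large $n$, and then checks the finitely many remaining cases directly---this is lighter than establishing monotonicity of $\mu_n$ and $c_n$.
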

\begin{proof}
Since 
$$
d(n,\gamma) = \ceil*{\frac{n\gamma}{n+1}} + n\geqslant \frac{n\gamma}{n+1} + n
$$
one has that for $n>5$
$$
\frac{F_n(\gamma)}{8\pi} \geqslant \frac{n\gamma}{n+1}+n+\frac{1}{2}  \frac{\gamma \left(2 n - n^{2}+\frac{1}{n+1}\right)-(n-1)^{2} (n+1)}{n^{2}-\sqrt{2n(n+1)}+1} =:a_n\gamma + b_n.
$$
 To complete the proof we estimate $a_n$, $b_n$ separately to show that $a_n>a$ and $b_n>b$ for $n\geqslant 6$, where $a,b$ are as in~\eqref{ineq:F5}.

One has
\begin{equation}
\label{eq:an}
a_n = \frac{n}{n+1} + \frac{2 n - n^{2}+\frac{1}{n+1}}{2(n^{2}-\sqrt{2n(n+1)}+1)}.
\end{equation}
Thus,
\begin{equation}
\label{ineq:an}
a_n\geqslant\frac{n}{n+1} - \frac{n^2-2n}{2(n^2+1-2(n+1))} = \frac{1}{2} -\frac{1}{n+1} + \frac{1}{n^2-2n-1}\geqslant \frac{1}{2} - \frac{1}{n+1}.
\end{equation}
The r.h.s of~\eqref{ineq:an} is increasing in $n$. As a result, it is straightforward to compute that~\eqref{ineq:an} implies $a_n>a$ for $n\geqslant 13$. For $5<n<13$ one proves $a_n>a$ by a direct computation using~\eqref{eq:an}.

Similarly, one has 
\begin{equation}
\label{eq:bn}
b_n = n - \frac{(n-1)^2(n+1)}{2(n^{2}-\sqrt{2n(n+1)}+1)}.
\end{equation}
Thus,
\begin{equation}
\label{ineq:bn}
b_n\geqslant n - \frac{(n-1)^2(n+1)}{2(n^{2}-2n-1)} = \frac{n-3}{2} - \frac{6n+2}{2(n^2-2n-1)} \geqslant \frac{n-5}{2},
\end{equation}
where in the last inequality we used $n\geqslant 6$. As a result,~\eqref{ineq:bn} implies $b_n> b$ for $n\geqslant 11$. For $5<n<11$ one proves $b_n>b$ by a direct computation using~\eqref{eq:bn}. 

\end{proof}

\begin{table}
	{\renewcommand{\arraystretch}{1.4}
	\begin{tabular}{|c | l |}
	\hline
	Value of $n$ & Genera $\gamma$, for which $n$ is optimal \\
	\hline
	1 & 4, 6, 8, 10, 14 \\
	\hline
	2 & 3, 7, 9, 18, 19 \\
	\hline
	3 & 5, 12, 13, 16, 17, 20, 24, 28, 29, 32 \\
	\hline
	4 & 11, 15, 21, 22, 23, 25-27, 30, 31, 33 -- 41, 45 - 47, 50 -- 53,   \\
	   & 55 - 61, 65, 70, 71, 75 - 77, 80 - 82, 95, 100, 101\\
	\hline
	5 & 42 -- 44, 48, 49, 54, 62-69, 72 -- 74, 78, 79, 83 -- 94, 96 -- 99,  \\
	   & $\gamma\geqslant 102$\\
	\hline
	\end{tabular}
	}
	\caption{ Optimal values of $n$ for low genera. The value $n=1$ corresponds to the Yang-Yau unequality, i.e. for $\gamma=4,6,8,10,14$ our results do not improve on~\eqref{YY:ineq}.
	}
\label{table:low_genera}
\end{table}

Finally, we investigate the range $1\leqslant n \leqslant 5$.

\begin{lemma}
\label{lemma:n<5}
One has the following
\begin{enumerate}
\item If $\gamma\geqslant 25$, then $F_1(\gamma)>F_5(\gamma)$;
\item If $\gamma\geqslant 41$, then $F_2(\gamma) >F_5(\gamma)$;
\item If $\gamma\geqslant 109$, then $F_3(\gamma) >F_5(\gamma)$;
\item If $\gamma\geqslant 389$, then $F_4(\gamma) >F_5(\gamma)$.
\end{enumerate}
\end{lemma}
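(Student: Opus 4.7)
The plan is to compare each $F_n(\gamma)$, $n\in\{1,2,3,4\}$, with $F_5(\gamma)$ using affine bounds in $\gamma$. An upper bound $F_5(\gamma)/(8\pi)\leq a\gamma + b$ with $a\leq 0.4271$, $b\leq 2.8501$ is already supplied by~\eqref{ineq:F5}. For each $n$ I would produce a matching affine lower bound $F_n(\gamma)/(8\pi)\geq c_n\gamma + d_n$ with $c_n>a$; then $F_n>F_5$ holds whenever $\gamma>(b-d_n)/(c_n-a)$. The four cutoffs $25$, $41$, $109$, $389$ would appear as the resulting integer thresholds, supplemented by direct verification of a finite number of borderline values using~\eqref{eq:F5}.

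For $n=1$ the argument is immediate: since $F_1(\gamma)/(8\pi)=\lceil\gamma/2\rceil+1\geq(\gamma+2)/2$, the inequality $F_1>F_5$ reduces to $(1/2-a)\gamma>b-1$, which holds for $\gamma\geq 26$. The single remaining case $\gamma=25$ is handled directly: $F_1(25)/(8\pi)=14$, while~\eqref{eq:F5} evaluates to $F_5(25)/(8\pi)=(882-104\sqrt{15})/(52-4\sqrt{15})<13.2$.

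For $n\in\{2,3,4\}$ the minimum of $F(\cdot,n,\gamma)$ is attained at the interior critical point $a_{\min}(n,\gamma)$. The optimality relation $P(a_{\min})Q'(a_{\min})=P'(a_{\min})Q(a_{\min})$ (with $P,Q$ as in the lemma preceding this one) simplifies the ratio $P/Q$ to $a_{\min}\delta/(2a_{\min}-1)$, and then substituting $d\delta=d+\gamma-1$ yields the closed form
\[
\frac{F_n(\gamma)}{8\pi}=\frac{(3a_{\min}-1)d+a_{\min}(\gamma-1)}{2a_{\min}-1},
\]
where $a_{\min}$ depends on $\gamma$ only through $\delta$. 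As $\gamma\to\infty$, $\delta\to(2n+1)/n$ and $a_{\min}$ tends to an explicit algebraic constant $a_n^*$, yielding asymptotic slopes $c_n^*>a$ (with gaps roughly $0.046$, $0.021$, $0.007$ for $n=2,3,4$ respectively). Combined with the lower bound $d\geq n\gamma/(n+1)+n$ and explicit two-sided control on $\delta(n,\gamma)$, this yields the required affine lower bound, and after some algebra the thresholds $41$, $109$, $389$.

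The main obstacle is the algebraic (square-root) dependence of $a_{\min}$ on $\delta$, compounded by the non-monotone dependence of $\delta(n,\gamma)$ on $\gamma$ caused by the ceiling in the definition of $d$. The cleanest route seems to be to view $F_n$ as a one-variable function of $\delta$, verify monotonicity in $\delta$ near the limiting value $(2n+1)/n$, and then bound $\delta(n,\gamma)$ uniformly from both sides for $\gamma\geq\gamma_n$; this translates into the affine lower bound in $\gamma$ needed for the threshold calculation, the gap $c_n-a$ shrinking toward $0$ as $n$ approaches $5$ explaining the rapid growth of the thresholds.
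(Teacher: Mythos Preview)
Your proposal is correct and follows essentially the same route as the paper: compare the linear upper bound~\eqref{ineq:F5} for $F_5$ against an affine lower bound for $F_n$, handling $n=1$ exactly as you do and, for $n\in\{2,3,4\}$, exploiting the critical-point identity $P/Q=P'/Q'$ to obtain the closed form $F_n(\gamma)=8\pi d\bigl(1-\delta a_{\min}/(1-2a_{\min})\bigr)$. The only cosmetic difference is in execution: rather than arguing via monotonicity of $F_n/(8\pi d)$ in $\delta$, the paper bounds $a_{\min}$ from above by an explicit decreasing function $a_+(\gamma)$ (freezing $\xi$ and the numerator $\delta$ at their limits $\xi_+,\delta_+$ and keeping $\delta_-(\gamma)$ in the denominator), then plugs $a_+(\gamma_0)$ with $\gamma_0=41,109,389$ directly, so no separate borderline checks are needed for $n=2,3,4$.
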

\begin{proof}
Let us first deal with $\gamma = 1$. One obtains $F_1(\gamma)\geqslant 4\pi(\gamma+2)$ and a direct comparison with~\eqref{ineq:F5} gives $F_1(\gamma)>F_5(\gamma)$ for $\gamma\geqslant 26$. The case $\gamma = 25$ can be checked directly.

If $x_0$ is a critical point of a function $\frac{f(x)}{g(x)}$, then $\frac{f(x_0)}{g(x_0)} = \frac{f'(x_0)}{g'(x_0)}$. Therefore, for $n=2,3,4$ one has 
\begin{equation}\label{eq:F(a_min)}
F_n(\gamma)=8 \pi d\left(1-\frac{\delta a_{\min }}{1-2 a_{\min }}\right).
\end{equation}
Recall that 
$$
\frac{n\gamma}{n+1} + (n+1)\leqslant d(n,\gamma) = \ceil*{\frac{n\gamma}{n+1}} + n+1\leqslant  \frac{n\gamma}{n+1} + (n+2),
$$
$$
\delta(n,\gamma) = 1 + \frac{\gamma-1}{d(n,\gamma)};\quad \xi(n,\gamma) = n\delta(n,\gamma) + n-1.
$$

{\bf Case 1: $n=2$}. One has
$$
\delta_-(\gamma) = \frac{5\gamma+9}{2\gamma+12}\leqslant\delta\leqslant \frac{5\gamma+6}{2\gamma+9}\leqslant \frac{5}{2}= \delta_+;\qquad \xi\leqslant 2\delta_+ + 1 =  6 = \xi_+.
$$
Then one has
\begin{equation}
a_{\min} \leqslant \frac{\xi_+-\sqrt{\xi_+^2-6\delta_+}}{6\delta_-(\gamma)} = \frac{(6-\sqrt{21})(\gamma+6)}{3(5\gamma+9)} = a_+(\gamma).
\end{equation}
We observe that $a_+(\gamma)$ is a decreasing function of $\gamma$. Therefore, by~\eqref{eq:F(a_min)} one has
$$
\frac{F_2(\gamma)}{8\pi}\geqslant \left(\frac{2\gamma}{3}+3\right)\left(1 -\frac{\delta_+a_+(\gamma)}{1-2a_+(\gamma)} \right)\geqslant \left(\frac{2\gamma}{3}+3\right)\left(1 -\frac{\delta_+a_+(\gamma_0)}{1-2a_+(\gamma_0)} \right)
$$
for all $\gamma\geqslant \gamma_0$. Taking $\gamma_0=41$, a direct computation combined with the expression~\eqref{ineq:F5} yields the claim.

{\bf Case 2: $n=3$}. One has
$$
\delta_-(\gamma) = \frac{7\gamma + 16}{3\gamma+20}\leqslant \delta\leqslant \frac{7\gamma+12}{3\gamma+16}\leqslant \frac{7}{3} = \delta_+;\qquad \xi\leqslant 3\delta_+ + 2 = 9 = \xi_+.
$$
Then one has
$$
a_{\min}\leqslant \frac{\xi_+-\sqrt{\xi_+^2-16\delta_+}}{8\delta_-(\gamma)}=\left(9 - \sqrt{81 - \frac{112}{3}}\right)\frac{3\gamma+20}{8(7\gamma+16)} = a_+(\gamma).
$$
We observe that $a_+(\gamma)$ is a decreasing function of $\gamma$. Therefore, by~\eqref{eq:F(a_min)} one has
$$
\frac{F_3(\gamma)}{8\pi}\geqslant \left(\frac{3\gamma}{4}+4\right)\left(1 -\frac{\delta_+a_+(\gamma_0)}{1-2a_+(\gamma_0)} \right)
$$ 
for all $\gamma\geqslant\gamma_0$. Taking $\gamma_0=109$, a direct computation combined with the expression~\eqref{ineq:F5} yields the claim.

{\bf Case 3: $n=4$}. One has
$$
\delta_-(\gamma) = \frac{9\gamma + 25}{4\gamma+30}\leqslant \delta\leqslant \frac{9\gamma+20}{4\gamma+25}\leqslant\frac{9}{4} = \delta_+;\quad \xi\leqslant 4\delta_+ + 3 = 12.
$$
Then one has
$$
a_{\min}\leqslant \frac{\xi_+-\sqrt{\xi_+^2-30\delta_+}}{10\delta_-(\gamma)} = \left(12 - \sqrt{144 - \frac{135}{2}}\right)\frac{4\gamma+30}{10(9\gamma + 25)} = a_+(\gamma).
$$
We observe that $a_+(\gamma)$ is a decreasing function of $\gamma$. Therefore, by~\eqref{eq:F(a_min)} one has
$$
\frac{F_4(\gamma)}{8\pi}\geqslant \left(\frac{4\gamma}{5}+5\right)\left(1 -\frac{\delta_+a_+(\gamma_0)}{1-2a_+(\gamma_0)} \right)
$$ 
for all $\gamma\geqslant \gamma_0$. Taking $\gamma_0=389$, a direct computation combined with the expression~\eqref{ineq:F5} yields the claim.
\end{proof}

The results of this section implies that choosing $n=5$ is optimal for large $\gamma$. The optimal values of $n$ for small $\gamma$ are collected in Table~\ref{table:low_genera}.

%
%

\end{document}